\numberwithin{equation}{section}
\newtheorem{theorem}{Theorem}
\newtheorem{proposition}[theorem]{Proposition}
\newtheorem{lemma}[theorem]{Lemma}
\newtheorem{corollary}[theorem]{Corollary}
\newtheorem{conjecture}[theorem]{Conjecture}
\theoremstyle{remark}
\newtheorem{example}[theorem]{Example}
\newtheorem{remark}[theorem]{Remark}
\newcommand{\defcolor}[1]{\Blue{#1}}
\newcommand{\demph}[1]{\defcolor{{\sl #1}}}
\newcommand{\C}{{\mathbb C}}
\newcommand{\K}{{\mathbb K}}
\newcommand{\R}{{\mathbb R}}
\newcommand{\Z}{{\mathbb Z}}
\renewcommand{\P}{{\mathbb P}}
\newcommand{\calY}{{\mathcal Y}}
\newcommand{\calH}{{\mathcal H}}
\newcommand{\url}[1]{{\tt #1}}
\newcommand{\Fdot}{F_{\bullet}}
\newcommand{\blambda}{\boldsymbol{\lambda}}
\newcommand{\bt}{\boldsymbol{t}}
\newcommand{\bu}{\boldsymbol{u}}
\newcommand{\bmu}{\boldsymbol{[\mu]}}
\newcommand{\Gr}{\mbox{\rm Gr}}
\newcommand{\LG}{\mbox{\rm LG}}
\newcommand{\HG}{\mbox{\rm HG}}
\newcommand{\Sym}{\mbox{\it Sym}}
\newcommand{\Wr}{\mbox{\rm Wr}}
\renewcommand{\sp}{\mathfrak{sp}}
\newcommand{\Sp}{\mbox{\rm Sp}}
\newcommand{\SL}{\mbox{\rm SL}}
\newcommand{\PGL}{\mbox{PGL}(2,\C)}
\newcommand{\Span}{\mbox{\rm span}}
\DeclareMathOperator{\ord}{ord}
\newcommand{\be}{{\bf e}}
\newcommand{\bbf}{{\bf f}}
\newcommand{\twoheadlongrightarrow}{\relbar\joinrel\twoheadrightarrow}
\newcommand{\I}{\raisebox{-1pt}{\includegraphics{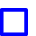}}}
\newcommand{\Is}{\includegraphics{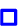}}
\newcommand{\TI}{\raisebox{-1pt}{\includegraphics{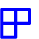}}}
\newcommand{\TT}{\raisebox{-1pt}{\includegraphics{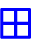}}}
\newcommand{\ThII}{\raisebox{-3pt}{\includegraphics{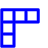}}}
\newcommand{\ThTI}{\raisebox{-3pt}{\includegraphics{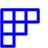}}}
\newcommand{\ThTT}{\raisebox{-7pt}{\includegraphics{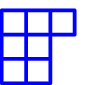}}}
\newcommand{\ThThT}{\raisebox{-3pt}{\includegraphics{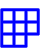}}}
\newcommand{\ThThTh}{\raisebox{-3pt}{\includegraphics{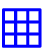}}}
\title[A congruence modulo four in real Schubert calculus]{A congruence modulo four in\\
      real Schubert calculus}
\author{Nickolas Hein}
\address{Nickolas Hein \\
         Department of Mathematics\\
         University of Nebraska at Kearney\\
         Kearney\\
         Nebraska \ 68849\\
         USA}
\email{heinnj@unk.edu}
\urladdr{http://www.unk.edu/academics/math/faculty/About\_Nickolas\_Hein/}
\author{Frank Sottile}
\address{Frank Sottile \\
         Department of Mathematics\\
         Texas A\&M University\\
         College Station\\
         Texas \ 77843\\
         USA}
\email{sottile@math.tamu.edu}
\urladdr{\url{http://www.math.tamu.edu/~sottile}}
\author{Igor Zelenko}
\address{Igor Zelenko\\
         Department of Mathematics\\
         Texas A\&M University\\
         College Station\\
         Texas \ 77843\\
         USA}
\email{zelenko@math.tamu.edu}
\urladdr{\url{http://www.math.tamu.edu/~zelenko}}
\thanks{Research of Sottile and Hein supported in part by NSF grant DMS-1001615}
\keywords{Lagrangian Grassmannian, Wronski map, Shapiro Conjecture}
\subjclass[2010]{14N15, 14P99}
\begin{document}

\begin{abstract}
 We establish a congruence modulo four in the real Schubert calculus on the
 Grassmannian of $m$-planes in $2m$-space.
 This congruence holds for fibers of the Wronski map and a generalization to what we call
 symmetric Schubert problems.
 This strengthens the usual congruence modulo two for numbers of real solutions to
 geometric problems.
 It also gives examples of geometric problems given by fibers of a map whose
 topological
 degree is zero but where each fiber contains real points.
\end{abstract}

\maketitle

%
\section*{Introduction}

The number of real solutions to a system of real equations is congruent
modulo two
to the number of complex solutions, for the simple reason that complex conjugation is an
involution which acts freely on the nonreal solutions.
We establish an additional congruence modulo four for
certain Schubert problems on the Grassmannian of $m$-planes in the space of polynomials of
degree at most $2m{-}1$, for $m>2$.
This congruence modulo four was originally observed in a computational experiment when
$m=3$ involving the Wronski map.
The reason for this congruence is that a natural symplectic structure on this space of
polynomials induces an additional geometric involution
on this Grassmannian which commutes with the Wronski map.
This key result (Lemma~\ref{L:commute}) is generalized in a sequel to this
paper~\cite{SA}.

This second involution commutes with complex conjugation, and the group they generate
consists of the identity and three involutions whose fixed points are, respectively, the
real Grassmannian, the Lagrangian Grassmannian, and a twisted real form of the Grassmannian,
which we call the Hermitian Grassmannian.
The common fixed point locus of this group is the real Lagrangian Grassmannian, and the
congruence modulo four is a consequence of it not forming a hypersurface in the real
Grassmannian when $m>2$.
This congruence is a general fact (Lemma~\ref{L:simple}) concerning fibers of a real map
that has a second involution,
whose
fixed point locus has codimension at least two.
It also applies to what we call symmetric Schubert problems
that
have this codimension
condition on their Lagrangian locus.
While we are unable to characterize which symmetric Schubert problems enjoy this
condition, we are able to
establish this condition for a large class of
symmetric Schubert problems.

Let \defcolor{$\K$} be a field which will either be the real numbers, \defcolor{$\R$}, or
the complex numbers, \defcolor{$\C$}.
We write \defcolor{$\K_d[t]$} for the $d{+}1$ dimensional vector space of univariate polynomials
of degree at most $d$ with coefficients from $\K$.
Given $f_1,\dotsc,f_m\in\K_{m{+}p{-}1}[t]$, their Wronskian is the determinant
 \[
   \det
    \left(\begin{matrix}
      f_1        & f_2        &\dotsb& f_m   \\
      f_1'       & f_2'       &\dotsb& f_m'  \\
     \vdots      &\vdots      &\ddots&\vdots \\
      f_1^{(m-1)}& f_2^{(m-1)}&\dotsb& f_m^{(m-1)}
    \end{matrix}\right)\ ,
 \]
which is a polynomial of degree at most $mp$.

Replacing $f_1,\dotsc,f_m$ by polynomials $g_1,\dotsc,g_m$ with the same linear span
will change their Wronskian by a constant, which is the determinant of the matrix
expressing the $g_i$ in terms of the $f_j$.
Thus the Wronskian is a well-defined map
\[
    \defcolor{\Wr}\ \colon\ \Gr(m,\K_{m+p-1}[t])\ \longrightarrow\
        \Gr(1,\K_{mp}[t])\ =\ \P(\K_{mp}[t])\,,
\]
where $\Gr(k,V)$ is the Grassmannian of $k$-dimensional subspaces of the $\K$-vector space
$V$.
Both $\Gr(m,\K_{m+p-1}[t])$ and $\P(\K_{mp}[t])$ are algebraic manifolds of dimension
$mp$, and $\Wr$ is a finite map.

When $\K=\C$, the degree of the Wronski map, which is the number of
points in a fiber above a regular value, was shown by Eisenbud and Harris~\cite{EH} (based
on earlier work of Schubert~\cite{Sch1886c}) to be
 \[
   \defcolor{\#^G_{m,p}}\ =\
      \frac{(mp)!\cdot 1!2!\dotsb(p{-}1)!}{m! (m{+}1)!\dotsb(m{+}p{-}1)!}\;.
  \leqno{(1)}
 \]
(Schubert determined the degree, and Eisenbud and Harris proved finiteness.)
In fact, this is the number of points in every fiber, if we count a point weighted by the 
algebraic multiplicity of the scheme-theoretic fiber at that point.
In all of our results, the bounds and congruences hold when the points are
counted with this multiplicity.

The real version of this \demph{inverse Wronski problem}, namely
studying the real
subspaces of polynomials in the fiber above a real polynomial $\Phi\in\R_{mp}[t]$, has been
the subject of recent interest.
This began with the conjecture of Boris Shapiro and Michael Shapiro ({\it circa} 1994), who
conjectured that if $\Phi$ had all of its roots real, then every ({\it a priori}) complex
vector space in the fiber $\Wr^{-1}(\Phi)$ would be real.
Significant evidence for this conjecture, both theoretical and computational, was found
in~\cite{So00b}.
Eremenko and Gabrielov~\cite{EG02} proved the conjecture when $\min\{m,p\}=2$, and it was
proved for all $m,p$ by Mukhin, Tarasov, and Varchenko~\cite{mtv1,mtv2}.
In their study of this conjecture, Eremenko and Gabrielov~\cite{EG01} calculated the
topological degree of the real Wronski map, obtaining a nonzero lower bound for the
number of real subspaces in the fiber above a general real polynomial of degree $mp$, when
$m{+}p$ is odd.
When $m{+}p$ is even, the topological degree is zero.
When both $m$ and $p$ are even, they gave a real polynomial of degree $mp$
with no real preimages under the Wronski map~\cite{EG03}.
When both $m$ and $p$ are odd it is not known if there is a nontrivial
lower bound on the number of real subspaces in the fibers of the Wronski map.
When $m=p$, we establish a congruence modulo four on the number of real points in a fiber
of the Wronski map.

\begin{theorem}\label{Th:Wronski}
 Suppose that $m=p$ and $m\geq 3$.
 For every $\Phi(t)\in\R_{m^2}[t]$, the number of real points in the fiber 
 $\Wr^{-1}(\Phi(t))$ is congruent to $\#^G_{m,m}$, modulo four, where each point is
 counted with its algebraic multiplicity.
\end{theorem}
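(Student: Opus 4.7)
The key is to apply Lemma~\ref{L:simple} using a second involution $\sigma$ on $\Gr(m, \C_{2m-1}[t])$ coming from a natural real symplectic form $\omega$ on the $2m$-dimensional space $\C_{2m-1}[t]$. The involution sends $V \mapsto V^{\perp_\omega}$, and its fixed locus is the Lagrangian Grassmannian $\LG(m,2m)$. By Lemma~\ref{L:commute}, $\sigma$ commutes with $\Wr$, and since $\omega$ is defined over $\R$, $\sigma$ commutes with complex conjugation $c$. Thus $c$, $\sigma$, and $\sigma c$ generate a Klein four-group acting on every fiber $\Wr^{-1}(\Phi)$ for real $\Phi$.

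To apply Lemma~\ref{L:simple}, I check that the common fixed locus of this group---the real Lagrangian Grassmannian, of real dimension $m(m+1)/2$---has codimension at least two in the real Grassmannian of dimension $m^2$. The codimension equals $m(m-1)/2$, which is at least $3$ for $m \geq 3$. Lemma~\ref{L:simple} then delivers the conclusion: the number of real points in any fiber of $\Wr$, counted with algebraic multiplicity, is congruent to $\deg \Wr = \#^G_{m,m}$ modulo $4$.

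The content of Lemma~\ref{L:simple} I would establish by deformation. Given $\Phi \in \R_{m^2}[t]$, connect it to a reference polynomial $\Phi_0$ with all roots real (where $|\Wr^{-1}(\Phi_0) \cap \R\Gr| = \#^G_{m,m}$ by Mukhin-Tarasov-Varchenko) by a generic real path in $\P(\R_{m^2}[t])$, chosen to avoid all strata of real codimension at least $2$---notably the image of the real Lagrangian Grassmannian and the singular locus of the real discriminant. The real count changes only at transverse crossings of the real discriminant, where a real multiplicity-$2$ point $P$ appears in the fiber. By the genericity of the path, $P$ is not Lagrangian, so $\sigma(P) \neq P$ is a second real multiplicity-$2$ point in the same fiber; the two resolve synchronously under perturbation---either both into real pairs or both into complex-conjugate pairs---changing the real count by exactly $\pm 4$. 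Thus the residue modulo $4$ is preserved along the path. The extension from generic $\Phi$ to all $\Phi$ follows from the constancy of the complex scheme-theoretic fiber in flat families together with the usual multiplicity conventions for real counts.

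The main obstacle is rigorously executing the deformation argument: proving that the complement of all codimension-$\geq 2$ strata in $\P(\R_{m^2}[t])$ is sufficiently connected to allow such a path from any real $\Phi$ to $\Phi_0$, verifying the claimed $\sigma$-equivariant synchronous resolution of multiplicity-$2$ points at generic discriminant crossings, and treating the non-generic $\Phi$ with care in the scheme-theoretic multiplicity count. Establishing Lemma~\ref{L:commute}---that $\sigma$ really commutes with $\Wr$---is a separate essential ingredient, assumed here as it is proved earlier in the paper.
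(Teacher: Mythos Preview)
Your proposal is correct and follows essentially the same approach as the paper: invoke the Lagrangian involution (Lemma~\ref{L:commute}), verify the codimension hypothesis of Lemma~\ref{L:simple}, and use connectedness of $\P(\R_{m^2}[t])$ together with a reference fiber having $\#^G_{m,m}$ real points. Two cosmetic differences: the paper checks the codimension of the \emph{image} $\Wr(\LG(V))$ in $\P(\C_{m^2}[t])$ (complex dimensions) rather than of $\R\LG(V)$ in $\R\Gr(m,V)$, though both yield $m(m-1)/2\geq 2$ since $\Wr$ is finite; and the paper cites~\cite{So99} rather than Mukhin--Tarasov--Varchenko for the existence of a fully real fiber, which suffices and predates MTV. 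Your third paragraph re-deriving Lemma~\ref{L:simple} is unnecessary for the proof of Theorem~\ref{Th:Wronski} itself, since that lemma is established separately.
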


Since $\#^G_{3,3}=42$, which is congruent to 2 modulo four, we obtain the following corollary.

\begin{corollary}\label{cor33}
 When $m=p=3$ in the fiber $\Wr^{-1}(\Phi(t))$ of a real polynomial $\Phi(t)$ of degree
 nine will contain either two simple real points or a real multiple point.
\end{corollary}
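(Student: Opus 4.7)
The plan is to deduce Corollary \ref{cor33} as an immediate arithmetic consequence of Theorem \ref{Th:Wronski}, so the proof will consist almost entirely of bookkeeping once that theorem is in hand.

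First I would evaluate formula (1) at $m=p=3$:
\[
  \#^G_{3,3} \;=\; \frac{9!\cdot 1!\cdot 2!}{3!\cdot 4!\cdot 5!} \;=\; 42,
\]
and observe that $42\equiv 2\pmod 4$. Applying Theorem \ref{Th:Wronski} with $m=3$ then gives that the number $N$ of real points in $\Wr^{-1}(\Phi(t))$, counted with algebraic multiplicity, satisfies $N\equiv 2\pmod 4$. Since $N$ is a nonnegative integer, this already forces $N\geq 2$.

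Next I would perform a short case split to extract the stated alternative. Write $N=s+M$, where $s$ counts the simple real points of the fiber and $M$ is the sum of the algebraic multiplicities of the non-simple real points. If $s\geq 2$, the fiber contains at least two simple real points and the first alternative holds. Otherwise $s\in\{0,1\}$, and from $s+M\geq 2$ we get $M\geq 1$; but $M$ is a sum of integers each at least $2$, so in fact $M\geq 2$ and some non-simple real point must exist, giving the second alternative. The two alternatives are not mutually exclusive, but together they exhaust every possibility compatible with $N\equiv 2\pmod 4$.

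The only genuine obstacle here lies entirely upstream, in the proof of Theorem \ref{Th:Wronski} itself; once the congruence modulo four is established, the corollary requires no further geometric input beyond the elementary observation that $42\equiv 2\pmod 4$.
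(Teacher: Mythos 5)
Your proposal is correct and follows exactly the route the paper intends: the paper presents the corollary as an immediate consequence of Theorem~\ref{Th:Wronski} after observing $\#^G_{3,3}=42\equiv 2\pmod 4$, and your bookkeeping (the $N=s+M$ decomposition with $M$ either zero or at least two) simply makes explicit the dichotomy that the paper leaves to the reader.
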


Since the topological degree of the Wronski map $\Wr\colon\Gr(3,\R_5[t])\to\P(\R_9[t])$ is
zero, this corollary shows that the lower bound can be larger than the topological degree.
The lower bound of
two from Corollary~\ref{cor33} is
attained because
we have examples of
polynomials $\Phi(t)$ of degree nine having only two real three-dimensional subspaces of
degree five polynomials in $\Wr^{-1}(\Phi(t))$.
Table~\ref{T:42} shows the result of a computing $1,000,000$ fibers of this Wronski map,
 \begin{table}[htb]
 %
 %
 \caption{Fibers of the Wronski map.}\label{T:42}

  \begin{tabular}{|c||c|c|c|c|c|c|c|c|c|c|c|c|}\hline
   Num. real&0&2&4&6&8&10&12&14&16&18&20&22\\\hline\hline
   Frequency&0&66380&0&310667&0&208721&0&51774&0&34524&0&45940\\\hline
  \end{tabular}\medskip

  \begin{tabular}{|c||c|c|c|c|c|c|c|c|c|c||c|}\hline
   Num. real&24&26&28&30&32&34&36&38&40&42&Total\\\hline\hline
   Frequency&0&8560&0&17881&0&6632&0&771&0&248150&1000000\\\hline
  \end{tabular}

 \end{table}
a computation that
consumed 391 gigaHertz-days.
The columns are labeled by the possible numbers of real solutions and each cell
in the second row
records
how many computed instances had that number of real solutions.
This paper originated in our desire to understand the result of this computation.

Many of the symmetric Schubert problems treated in Section~\ref{S:symmetric} also have a
lower bound of two on their number of real solutions, coming from the congruence modulo
four.
In Example~\ref{Ex:degreeZero} we give a family of
Schubert
problems generalizing
that of Corollary~\ref{cor33} and Table~\ref{T:42}.
For the problems in this family the topological degree of the corresponding Wronski map is
zero, but
there are always at least two points in every fiber.
This family illustrates
that these topologically derived lower bounds may not be sharp.

There is now a growing body of examples of geometric problems which have lower bounds on
their numbers of real solutions.
This phenomenon occurs not only in the Schubert calculus~\cite{EG01}, but also in counting
rational curves on varieties~\cite{W,IKS03,IKS04} and lines on hypersurfaces of degree
$2n{-}1$ in $\P^n$~\cite{OT1,FK}.
While there are some general theoretical bases for some of these lower
bounds~\cite{OT2,SS}, this phenomena is far from being understood.
Two of us conducted a large computational experiment of related lower bounds in the
(not necessarily symmetric) Schubert calculus, which was reported on in~\cite{Lower}.

While a similar congruence modulo four on the number of real solutions was also observed
in~\cite{AH}, such congruences appear to be a new phenomenon.
A consequence of these congruences is that the sharp lower bound on the number of
real solutions to these problems is congruent modulo 4 to the number of complex
solutions. 
A similar congruence for lower bounds also occurs when
enumerating rational curves on some Del Pezzo surfaces.
Welschinger~\cite{W} defined an invariant that is a lower bound on the
number of real rational curves in a given divisor class interpolating real points.
Later, Mikhalkin showed that this Welschinger invariant is congruent modulo four to the
number of complex curves, for toric Del Pezzo surfaces, and
this was extended to the other Del Pezzo surfaces ($\P^2$ blown up at $a=4,5,6$ points) by
Itenberg, Kharlamov, and Shustin~\cite{IKS10,IKS12}. 
(For more, see the discussion in~\cite[\S 7.2]{IKS12}.)

This paper is organized as follows.
In Section~\ref{S:definitions} we present some basics on Schubert calculus and derive the
canonical symplectic form on $\K_{2m-1}[t]$.
We establish a framework for congruences modulo four in Section~\ref{S:lemma}.
In Section~\ref{S:mod4} we prove Theorem~\ref{Th:Wronski}, and in
Section~\ref{S:symmetric} we extend this congruence to certain symmetric Schubert
problems.

%
\section{Definitions}\label{S:definitions}

All of our varieties and maps between varieties are defined over the
real numbers.
That is, they are complex varieties equipped with an antiholomorphic involution which we
call complex conjugation,
and the maps commute with the conjugation.
We will often write $X$ when we intend its set of complex points, $X(\C)$.
Let $X(\R)$ be the set of points of $X(\C)$ that
are fixed under complex conjugation.
Write $\Z_2$ for the group $\Z/2\Z$ with two elements and $[n]$ for the set
$\{1,2,\dotsc,n\}$ where $n$ is a positive integer.
We write \defcolor{$V^*$} for the linear dual of a vector space $V$.
Let $f\colon X\to Y$ be a map between varieties of the same dimension with $Y$ smooth.
A point $x\in X$ is a critical point of the map $f$ if the differential of
$f$ at $x$ is not an isomorphism of Zariski tangent spaces.

%
\subsection{Schubert Calculus}
Let $V$ be a vector space over $\K$ of dimension $m{+}p$ where $m,p$ are positive integers.
We write \defcolor{$\Gr(m,V)$} for the Grassmannian of $m$-dimensional linear
subspaces of $V$.
This equivalently parametrizes $m$-dimensional quotients of $V^*$.
This Grassmannian is a manifold of dimension $mp$ and is a homogeneous space for the
special linear group $SL(V)$.

A \demph{flag} is a sequence
$\Fdot\colon F_1\subset F_2\subset \dotsb \subset F_{m+p}=V$ of linear subspaces
of
$V$ with $\dim F_i=i$.
A \demph{partition} $\lambda$ is a weakly decreasing sequence of integers
$\lambda\colon p\geq\lambda_1\geq\dotsb\geq\lambda_m\geq 0$.
A flag $\Fdot$ and a partition $\lambda$ together determine a \demph{Schubert variety}
 \begin{equation}\label{Eq:SchubertVariety}
   \defcolor{X_\lambda\Fdot}\ :=\
    \{ H\in\Gr(m,V)\mid \dim H\cap F_{p+i-\lambda_i}\geq i\,,\quad i=1,\dotsc,m\}\,.
 \end{equation}
This has codimension $\defcolor{|\lambda|}:=\lambda_1+\dotsb+\lambda_m$ in $\Gr(m,V)$.
When $\lambda=(1,0,\dotsc,0)$ (written $\I$), the Schubert
variety is
 \[
   X_{\Is}\Fdot\ =\
    \{H\in\Gr(m,V)\mid \dim H\cap F_p\geq 1\}\,.
 \]
That is, those $H$ which meet $F_p$ nontrivially.

A \demph{Schubert problem} is a list
$\defcolor{\blambda}=(\lambda^1,\dotsc,\lambda^n)$ of partitions
where $|\lambda^1|+\dotsb+|\lambda^n|=mp$.
Given a Schubert problem $\blambda$ and general flags $\Fdot^1,\dotsc,\Fdot^n$, the
intersection of Schubert varieties
\[
   X_{\lambda^1}\Fdot^1\cap
   X_{\lambda^2}\Fdot^2\cap\dotsb\cap
   X_{\lambda^n}\Fdot^n
\]
is transverse~\cite{Kl74}.
The number \defcolor{$d(\blambda)$} of
complex
points in this intersection does not
depend upon the choice of general flags and may be computed using algorithms from the
Schubert calculus.
Our concern here is not in computing this number, but in congruences satisfied by numbers
of real solutions, for some Schubert problems and special choices of flags.

Suppose that $p=m$ and that $V$ is equipped with a symplectic form, i.e.\ a nondegenerate
alternating  form, denoted by $\langle\cdot,\cdot\rangle$.
The symplectic group \defcolor{$\Sp(V)$} is the subgroup of $\SL(V)$
consisting of linear transformations that
preserve this form
$\langle\cdot,\cdot\rangle$,
\[
   \Sp(V)\ =\ \{g\in\SL(V)\mid \langle gv,gw \rangle = \langle v,w\rangle\
                 \forall v,w\in V\}\,.
\]
A subspace $H\in\Gr(m,V)$ is \demph{Lagrangian} if $\langle H,H\rangle\equiv 0$.
The subset \defcolor{$\LG(V)$} of $\Gr(m,V)$ consisting of Lagrangian subspaces forms a
manifold of dimension $\binom{m+1}{2}$ and is a homogeneous space for the symplectic group
$\Sp(V)$.

The Lagrangian Grassmannian also has Schubert varieties
(see~\cite[Ch.~III]{FP98} for more details).
These require \demph{isotropic flags}, which are flags where
the subspace $F_i$ is the annihilator of $F_{2m-i}$ in that $\langle
F_i,F_{2m-i}\rangle\equiv 0$.
In particular, $F_m$ is Lagrangian.
We also need symmetric partitions, which we now explain.
A partition $\lambda\colon m\geq \lambda_1\geq\dotsb\geq\lambda_m\geq 0$ may be
represented by its Young diagram, which is an array of boxes with $\lambda_i$ boxes in row
$i$.
For example,
\[
   (2,1)\ \longleftrightarrow\ \raisebox{-3.5pt}{\includegraphics{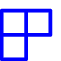}}\,,
   \qquad
   (3,2,2)\ \longleftrightarrow\ \ThTT\,,
   \qquad\mbox{and}\qquad
   (4,2,1,1)\ \longleftrightarrow\ \raisebox{-10.5pt}{\includegraphics{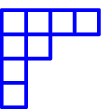}}\,.
\]
A partition is \demph{symmetric} if its Young diagram is symmetric about its main diagonal.
The partitions $(2,1)$ and $(4,2,1,1)$ are symmetric,
while
$(3,2,2)$ is not symmetric.

A symmetric partition $\lambda$ and an isotropic flag $\Fdot$ together determine a
Schubert variety of $\LG(V)$,
$\defcolor{Y_\lambda\Fdot}$, which is equal to
$X_\lambda\Fdot\cap \LG(V)$, so that
\[
   Y_\lambda\Fdot\ =\
    \{ H\in \LG(V)\mid \dim H\cap F_{m+i-\lambda_i}\geq i\,,\quad i=1,\dotsc,m\}\,.
\]
Its codimension in $\LG(V)$ is
\[
   \defcolor{\|\lambda\|}\ =\ \tfrac{1}{2}( |\lambda| + \ell(\lambda))\,,
\]
where $\ell(\lambda)$ is the number of boxes in the Young diagram of $\lambda$
that
lie on its main diagonal, $\ell(\lambda)=\max\{i\mid i\leq\lambda_i\}$.
For example,
\[
   \ell(\I)\ =\
   \ell(\TI)\ =\ 1
  \qquad\mbox{and}\qquad
   \ell(\TT)\ =\
   \ell(\ThTI)\ =\ 2\,.
\]
We compare this to the alternative indexing set by strict partitions $\kappa$,
which are strictly decreasing sequences of positive integers
$\kappa\colon m\geq\kappa_1>\dotsb>\kappa_k>0$.
Such a sequence is obtained from a symmetric partition $\lambda$ as the
subsequence of positive numbers in the decreasing sequence
$\lambda_1>\lambda_2-1>\dotsb>\lambda_k-k+1$.
The diagram of a strict partition is obtained by removing the boxes below the diagonal
from the diagram of a symmetric partition.
For example,
\[
   (5,3,2,1,1)\ \leftrightarrow\ \raisebox{-14pt}{\includegraphics{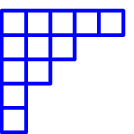}}
   \quad\rightsquigarrow\quad
   \raisebox{-14pt}{\includegraphics{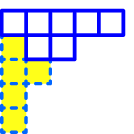}}
   \quad\rightsquigarrow\quad
   \raisebox{-3.5pt}{\includegraphics{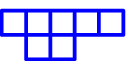}}
   \ \leftrightarrow\ (5>2)\,.
\]
One value of this alternative indexing is that $\|\lambda\|$ is the number of boxes in the
corresponding strict partition.

Given general isotropic flags $\Fdot^1,\dotsc,\Fdot^n$ and symmetric partitions
$\lambda^1,\dotsc,\lambda^n$, the intersection  of Schubert varieties
 \begin{equation}\label{Eq:Lagr-Int}
    Y_{\lambda^1}\Fdot^1\cap
    Y_{\lambda^2}\Fdot^2\cap\dotsb\cap
    Y_{\lambda^n}\Fdot^n
 \end{equation}
is generically transverse~\cite{Kl74}.
When $n=2$, we only need $\Fdot^1$ and $\Fdot^2$ to be in
\demph{linear general position}, $F^1_i\cap F^2_{2m{-}i}=\{0\}$, for $i=1,\dotsc,2m{-}1$.
In particular, the intersection~\eqref{Eq:Lagr-Int} is either empty or every component has
dimension
\[
   \binom{m+1}{2}-\|\lambda^1\|-\|\lambda^2\|-\dotsb-\|\lambda^n\|\,.
\]
This intersection consists of the Lagrangian subspaces
that belong to the intersection of Schubert varieties in the Grassmannian $\Gr(m,V)$,
\[
   \bigl(X_{\lambda^1}\Fdot^1\cap
    X_{\lambda^2}\Fdot^2\cap\dotsb\cap
    X_{\lambda^n}\Fdot^n\bigr) \cap \LG(V)\,.
\]

%
\subsection{Canonical symplectic form on $\K_{2m-1}[t]$}\label{S:alternating}

We follow the discussion of apolarity in \S1 of~\cite{O}.
Let $U$ be a vector space over $\K$ and $r$ a nonnegative integer.
Write \defcolor{$S^rU$} for the $r$-th symmetric power of $U$.
Its elements are degree $r$ homogeneous polynomials on \defcolor{$U^*$}, the vector space
dual to $U$, and thus $r$-forms on $\P(U)$, the projective space of hyperplanes of $U$.
The dual vector space to $S^rU$ is $S^rU^*$, whose elements act as differential operators
of degree $r$ on the homogeneous polynomials of degree $r$ representing the elements of 
$S^rU$. 

When $\dim U=2$, the exterior product leads to a symplectic form on
$U$, $\defcolor{\langle u,v\rangle}=u\wedge v$, which is well-defined up to a scalar
(corresponding to an identification of $\wedge^2 U$ with $\K$).
This induces a nondegenerate form \defcolor{$\langle\cdot,\cdot\rangle$} on
$S^r U$ which is well-defined up to a scalar multiple.
It is symmetric when $r$ is even and alternating when $r$ is odd.
Indeed, let $s,t$ span $U$ with $\langle s,t\rangle = 1$ and suppose that
$u=(u_0s+u_1t)^r$ and $v=(v_0s+v_1t)^r$.
Then a direct calculation gives $\langle u,v\rangle = (u_0v_1-v_0u_1)^r$, so that
$\langle u,v\rangle = (-1)^r\langle v,u\rangle$.
Then the claim about the symmetry of the form follows as $S^rU$ is spanned by $r$-th
powers of linear forms.
This computation gives the following formula:
when $u=\sum_{i=0}^r u_is^{r-i}t^i/i!$ and $v=\sum_{i=0}^r v_is^{r-i}t^i/i!$, then, up to a
scalar we have
 \begin{equation}\label{Eq:OurForm}
  \langle u,v\rangle\ =\
   \sum_{i=0}^r (-1)^i u_i v_{r-i}\,.
 \end{equation}
Henceforth we dehomogenize, setting $s=1$ and identifying $S^rU$ with $\K_r[t]$ and work with
these coordinates for $\K_r[t]$.
We will restrict to the case when $r$ is odd, writing $r=2m{-}1$, and thus
$\K_{2m-1}[t]$ has a natural structure of a symplectic vector space.

This symplectic form on $\K_{2m-1}[t]$ is classical and can be derived in several
different ways.
One attractive derivation is a consequence of $\K_{2m-1}[t]$ being
the space of solutions of the self-adjoint linear differential equation $y^{(2m)}=0$.
In fact, many of the results of this paper may be generalized to Schubert calculus on
spaces of solutions of a self-adjoint linear differential equation~\cite{SA}.

%
\subsection{Lagrangian involution and the Hermitian Grassmannian}\label{S:involution}

Suppose that $V\simeq \C^{2m}$ is a symplectic vector space with symplectic form
$\langle\cdot,\cdot\rangle$.
This form induces an involution on $\Gr(m,V)$ whose set of fixed
points is the Lagrangian Grassmannian.
For a linear subspace $K$ of $V$,
define $\defcolor{K^\angle}$ to be
$\{ v\in V\mid \langle u,v\rangle=0\ \mbox{for all }u\in K\}$.
We have $(K^\angle)^\angle=K$ and $\dim K + \dim K^\angle=2m$.

By this dimension calculation, $\angle$ restricts to an involution on $\Gr(m,V)$,
which we call the \demph{Lagrangian involution}.
Since $H$ is Lagrangian if and only if $H^\angle=H$, the Lagrangian Grassmannian is the
set of fixed points of the Lagrangian involution.

The real points  (those fixed by complex conjugation,
$x\mapsto\overline{x}$) of $\Gr(m,V)$ and $\LG(V)$ are,
respectively, the real Grassmannian \defcolor{$\R\Gr(m,V)$} and the real Lagrangian
Grassmannian, \defcolor{$\R\LG(V)$}.
These real points correspond to real linear subspaces of $V$ and real Lagrangian subspaces
of $V$, respectively.

There is another distinguished type of linear subspace of $V$.
An $m$-dimensional linear subspace $H$ of $V$ is \demph{Hermitian} if
\[
    \overline{H}\ =\ H^\angle
   \qquad\mbox{equivalently}\qquad H\ =\ \overline{H}^\angle\,.
\]
The \demph{Hermitian Grassmannian}  $\defcolor{\HG(V)}\subset\Gr(m,V)$ is the set of all
Hermitian linear subspaces of $V$.
The map $H\mapsto\overline{H}^\angle$ is an anti-holomorphic involution which equips
$\Gr(m,V)$ with a second real structure whose real points constitute the Hermitian
Grassmannian.
Thus $\HG(V)$ is a real algebraic manifold of dimension $m^2$ whose
complexification is $\Gr(m,V)$.
We have the following diagram of inclusions.
\[
  \begin{picture}(185,100)(5,0)
                            \put(78,88){$\Gr(m,V)$}
                   \put(100,84){\line(0,-1){25}}
   \put(85,84){\line(-2,-1){50}}             \put(115,84){\line(2,-1){50}}
   \put(5,45){$\R\Gr(m,v)$} \put(85,45){$\LG(V)$}  \put(155,45){$\HG(V)$}
   \put(85,13){\line(-2,1){50}}             \put(115,13){\line(2,1){50}}
                   \put(100,13){\line(0,1){25}}
                            \put(82,0){$\R\LG(V)$}
    \end{picture}
\]

\begin{proposition}\label{P:matrices}
 Each of these five Grassmannians may be realized as
the
 smooth
 compactification of a space of
 matrices according to the following table.
\[
  \begin{tabular}{|l||c|c|c|c|c|}\hline
    {\rm Space of matrices}&$M_{m\times m}(\C)$&$M_{m\times m}(\R)$
        \raisebox{-1pt}{\rule{0pt}{12.5pt}}
      &$\Sym_{m}(\C)$&$\Sym_{m}(\R)$&$\calH_m$\\\hline\hline
    {\rm Grassmannian}&$\Gr(m,V)$&$\R\Gr(m,V)$&$\LG(V)$&$\R\LG(V)$
      &$\HG(V)$\rule{0pt}{12pt}\\\hline
  \end{tabular}
\]
 Here \defcolor{$M_{m\times m}(\K)$} is the set of all $m\times m$ matrices over $\K$,
 \defcolor{$\Sym_m(\K)$} is $m\times m$ symmetric matrices over $\K$,
 and \defcolor{$\calH_m$} is all $m\times m$ Hermitian matrices.
\end{proposition}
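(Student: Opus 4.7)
The plan is to exhibit all five Grassmannians simultaneously in a single affine chart, and identify each as the fixed locus of an appropriate involution whose fixed locus in the chart is one of the listed matrix spaces. Smoothness and completeness of the five Grassmannians are already known (each is homogeneous for a reductive group), so once we produce a dense open affine subset isomorphic to the claimed matrix space, we are done.

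First I would choose a Darboux basis of $V$ over $\R$, which produces a decomposition $V = V_1\oplus V_2$ into two real Lagrangian complements with $\langle\cdot,\cdot\rangle$ restricting to the canonical pairing $V_1\times V_2\to\C$ (so that $V_2$ is identified with $V_1^*$ and both are defined over $\R$). On the open subset $U\subset\Gr(m,V)$ of subspaces transverse to $V_2$, every $H$ is the graph of a unique linear map $A\colon V_1\to V_2$, giving a chart $U\cong\mathrm{Hom}(V_1,V_2)\cong M_{m\times m}(\C)$. This gives the first column of the table, since $\Gr(m,V)$ is smooth and complete.

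Next I would compute how each symmetry acts on the coordinate matrix $A$. Because $V_1,V_2$ are defined over $\R$, complex conjugation preserves $U$ and acts by $A\mapsto\overline{A}$. A direct computation in the Darboux basis (writing $J=\bigl(\begin{smallmatrix}0&I\\-I&0\end{smallmatrix}\bigr)$ and taking the symplectic orthogonal of $\mathrm{graph}(A)$) shows that the Lagrangian involution $H\mapsto H^{\angle}$ preserves $U$ and acts by $A\mapsto A^T$. The composite anti-holomorphic involution $H\mapsto\overline{H}^{\angle}$ therefore acts by $A\mapsto\overline{A^T}=A^*$. Reading off the fixed loci in $U$: real subspaces give $\overline{A}=A$, i.e.\ $M_{m\times m}(\R)$; Lagrangian subspaces give $A=A^T$, i.e.\ $\Sym_m(\C)$; Hermitian subspaces give $A=A^*$, i.e.\ $\calH_m$; and real Lagrangian subspaces give $\Sym_m(\R)$.

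Finally I would note that in each case $X\cap U$ is dense and open in the respective sub-Grassmannian $X$ (the complement being a proper closed Schubert-type locus where $H$ meets $V_2$ nontrivially), so $X$ is indeed a smooth compactification of the matrix space in its row. The main computational step — and the only real obstacle — is verifying that $\angle$ acts as transposition in the chosen chart; once the Darboux form and the pairing $V_2\cong V_1^*$ are fixed, this is a one-line calculation, but the sign conventions and the identification $\mathrm{Hom}(V_1,V_2)\cong V_1^*\otimes V_1^*$ must be kept consistent so that the fixed points of $A\mapsto A^T$ really do correspond to the classical notion of symmetric matrices, and analogously for the Hermitian case.
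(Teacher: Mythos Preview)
Your proposal is correct and follows essentially the same approach as the paper: choose a basis giving a decomposition $V=V_1\oplus V_2$ into real Lagrangian complements, parametrize the big cell by $m\times m$ matrices via row space $[I_m:X]$ (equivalently, via graphs of maps $V_1\to V_2$), and identify the fixed loci of the involutions in this chart. The paper's proof is terser---it simply asserts the correspondences without writing out how $\angle$ and complex conjugation act on the matrix coordinate---so your explicit verification that $\angle$ acts as $A\mapsto A^T$ and the composite as $A\mapsto A^*$ is a welcome addition, not a departure.
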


\begin{proof}
Let $V\simeq{\mathbb C}^{2m}$ have basis  $\bbf_0,\dotsc,\bbf_{2m-1}$ for which the
 symplectic form is
\[
   \langle \bbf_i\,,\,\bbf_j \rangle\ =\ \left\{\begin{array}{rcl}
        \delta_{i+m,j}&\ &\mbox{if }i<m\\
       -\delta_{i,j+m}&&\mbox{if }i\geq m \end{array}\right. \ .
\]
 Let $I_m$ be the $m\times m$ identity matrix.
 In this ordered basis, the association
\[
    X\ \longmapsto\ \mbox{row space} [I_m: X]\,,
\]
 defines a map from the space of $m\times m$ matrices $X$ to the Grassmannian $\Gr(m,V)$.
  This identifies the space of $m\times m$ matrices with the big cell of $\Gr(m,V)$.
  In this cell, points of the real Grassmannian correspond to real matrices, points of the
  Lagrangian Grassmannian to symmetric matrices, and points of the Hermitian Grassmannian
  to Hermitian matrices.
\end{proof}

\begin{remark}
  If we consider the symmetric form on $V=\C^{2m}$ given by
  $\langle \bbf_i,\bbf_j\rangle=\delta_{|i-j|,m}$, then the space of isotropic $m$-planes
  in $V$---called the \defcolor{orthogonal Grassmannian}---corresponds to skew-symmetric
  matrices, those with $X^T=-X$, and there is a similar
  \defcolor{skew-Hermitian Grassmannian} corresponding to skew-Hermitian matrices,
  those with $\overline{X}^T=-X$.
\end{remark}

%
\section{A very simple lemma}\label{S:lemma}
Suppose that $f\colon X\to Z$ is a proper dominant map between irreducible 
complex varieties of the same dimension with $Z$ smooth.
Then $f$ has a degree, $d$, which is the number of (complex) inverse images of a regular
value $z\in Z$.
Suppose that $X$, $Z$, and the map $f$ are all defined over the real numbers, $\R$.
When $z\in Z(\R)$, we have the congruence,
 \begin{equation}\label{Eq:trivial_congruence}
   \# f^{-1}(z)\cap X(\R)\ \equiv\ d \mod 2\,,
 \end{equation}
which holds as the group $\Z_2$ generated by complex conjugation acts freely on the
nonreal points of the fiber.
When the map $f$ is finite, this congruence~\eqref{Eq:trivial_congruence} holds for all
$z$, when multiplicities are taken into account.
By multiplicity, we mean the usual Hilbert-Samuel
multiplicity of a point in a zero-dimensional scheme.

There is an additional congruence on the number of
real points when there is an additional involution which satisfies a simple hypothesis.

\begin{lemma}\label{L:simple}
 Suppose that $f\colon X\to Z$ is a proper dominant map of varieties defined over $\R$
 with $Z$ smooth.
 Suppose that the variety $X$ has an involution $\angle\curvearrowright X$ written
 $x\mapsto x^\angle$ satisfying $f(x^\angle)=f(x)$ such that the image $f(X_\angle)$ in
 $Z$ of the set of fixed points $X_\angle$ has codimension at least $2$,
 in that $\dim f(X_\angle) +2 \leq \dim Z$.

 If $y,z\in Z(\R)$ belong to the same connected component of\/$Z(\R)$ and the fibers above 
 them are finite and contain no points of $X_\angle$, then
 \[
   \# f^{-1}(y)\cap X(\R)\ \equiv\ \# f^{-1}(z)\cap X(\R)\ \mod 4\,.
 \]
\end{lemma}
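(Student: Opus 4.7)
The plan is to connect $y$ and $z$ by a generic real path $\gamma \colon [0,1] \to Z(\R)$ and show that the function $N(t) := \#\bigl(f^{-1}(\gamma(t)) \cap X(\R)\bigr)$, counted with multiplicity, is constant modulo four along $\gamma$. The codimension hypothesis implies that $f(X_\angle) \cap Z(\R)$ has real codimension at least two in $Z(\R)$, so within the chosen connected component of $Z(\R)$ I can select a path $\gamma$ from $y$ to $z$ that entirely avoids $f(X_\angle)$. After a further generic perturbation, I arrange that $\gamma$ has finite fibers throughout and crosses the discriminant locus of $f$ transversely at finitely many parameters, at each of which the only degeneration is a pair of smooth points of the fiber colliding at a single critical point of $f$.

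At each such critical parameter $t_*$, the change in $N$ is computed locally at each real critical point in the degenerating fiber. Because $\angle$ is a holomorphic automorphism of $X$ preserving fibers of $f$ and commuting with complex conjugation, it permutes the real critical points above $\gamma(t_*)$. The assumption $\gamma(t_*) \notin f(X_\angle)$ forces $p_0 \ne p_0^\angle$ for every such $p_0$, so they occur in $\angle$-pairs $\{p_0, p_0^\angle\}$, both real. A simple collision at $p_0$ changes $N$ by $0$ or $\pm 2$, and by $\angle$-equivariance the contribution at $p_0^\angle$ is identical. The net change at $t_*$ is therefore $0$ or $\pm 4$, a multiple of four.

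To absorb multiplicities at the endpoints, I compare $N(y)$ with $N(y')$ for a regular value $y'$ sufficiently close to $y$. Each real critical point $p_0 \in f^{-1}(y)$ lies in a $\angle$-pair (by the endpoint hypothesis) and contributes $\mu$ to $N(y)$, where $\mu$ is the local length of the scheme-theoretic fiber at $p_0$, while it contributes $r$ to $N(y')$, where $r$ is the total length of the real part of the local fiber over $y'$. Complex conjugation acts freely on the non-real part of the local fiber, so $\mu - r$ is even; summing the contributions of $p_0$ and $p_0^\angle$ yields a difference $2(\mu - r)$ that is a multiple of four. Hence $N(y) \equiv N(y') \equiv N(z') \equiv N(z) \pmod{4}$.

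The main obstacle, and the reason the codimension-two hypothesis is essential, is the $\angle$-pairing of real critical points. Without it, a real critical point could lie in $X_\angle$, and the lone collision there would change $N$ by $\pm 2$ rather than $\pm 4$, breaking the congruence. The bound $\dim f(X_\angle) + 2 \le \dim Z$ is precisely what allows us to sidestep this obstruction by routing $\gamma$ through the codimension-two complement of $f(X_\angle)$ in $Z(\R)$.
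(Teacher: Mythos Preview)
Your proof is correct and follows essentially the same approach as the paper's: choose a real path avoiding $f(X_\angle)$ using the codimension-two hypothesis, and then use that real critical points along the path come in $\angle$-pairs so that the change in the real count at each critical parameter is a multiple of four. The only difference is cosmetic---you reduce to simple nodal collisions via a further genericity argument, whereas the paper works directly with the arcs of the one-dimensional pullback $X_\gamma \to [0,1]$ and counts the (necessarily even) number of complex arcs meeting each real singular point, which avoids any appeal to smoothness of $X$ or of the discriminant.
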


\begin{proof}
 By our assumption on $\dim X_\angle$ and $y,z$, it is possible to connect $y$ with $z$ by
 a path $\gamma\colon[0,1]\to Z(\R)$ which does not meet the set $f(X_\angle)(\R)$ of
 points $w\in Z(\R)$ whose fiber $f^{-1}(w)$ meets $X_\angle$.
 Furthermore, we may assume that the image of $\gamma$ contains at most finitely many
 critical values of $f$.
 Let $S\subset[0,1]$ be those parameter values $s$ for which $\gamma(s)$ is a critical
 value.

 Pulling back regular fibers of $f$ along $\gamma$ and then taking the closure in
 $X(\C)\times[0,1]$ gives a map $f_\gamma\colon X_\gamma\to[0,1]$ with finite fibers.
 At a point $t\in[0,1]$ for which the fiber of $f$ over $\gamma(t)$ is finite, this 
 fiber has the same number of points as the fiber of $X_\gamma$ over $t$, and they have
 the same number of real points.
 If we let $c$ denote complex conjugation, then the group
 $K=\{e,c,\angle,c\angle\}\simeq \Z_2\times\Z_2$  acts on the fibers of $f_\gamma$.
 By our assumption on $\gamma$ and $X_\angle$, there are no $\angle$-fixed points in
 $X_\gamma$.

 Over each interval $I$ in $[0,1]\smallsetminus S$, $X_\gamma$ consists of a collection of
 disjoint arcs which may come together at endpoints of $I$ lying in $S$.
 Each of the arcs either has all of its points real (fixed by $c$), all of its points
 fixed by $c\angle$, or has $K$ acting freely on all of its points.
 The last two types of arcs contain no real points, and the second cannot have a real
 endpoint as $X_\gamma$ has no $\angle$-fixed points.

 Let $s\in S$ be a critical value and $I$ an interval in $[0,1]\smallsetminus S$ with $s$
 as one endpoint.
 The multiplicity of a point $x\in f_\gamma^{-1}(s)$ is the number of arcs in $X_\gamma$
 over $I$ with endpoint at $x$.
 It follows that the number of real points in $f_\gamma^{-1}(s)$, counted with multiplicity, is
 at least the number of real arcs over $I$.
 Only the real singular points $x\in f_\gamma^{-1}(s)$ can contribute to the difference.
 The contribution from $x$ is the number of
 complex arcs in $X_\gamma$ over $I$ that come together at $x$, which is an even number.
 The same even number of complex arcs comes together at $x^\angle$, which implies that 
 the pair $\{x,x^\angle\}$ contributes a multiple of four to any change in multiplicity at
 the point $s$.
 This completes the proof.
\end{proof}

\begin{remark}\label{R:Hermitian}
 We could have argued symmetrically that the number of points fixed by $c\angle$ may only
 change in multiples of four.
 This follows from Lemma~\ref{L:simple} as the composition $c\angle$ of the two
 involutions $c$ and $\angle$ gives a second real structure on $X$.
\end{remark}

\begin{corollary}
 Let $f\colon X\to Z$ and $\angle$ be as above.
 Then every point of $X_\angle$ is a critical point of $f$.
\end{corollary}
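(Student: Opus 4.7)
The plan is to argue by contradiction, exploiting the interplay between the differential of $f$, the differential of $\angle$, and the codimension hypothesis on $f(X_\angle)$. Suppose for contradiction that some $x\in X_\angle$ is a regular point of $f$, so that $df_x\colon T_xX\to T_{f(x)}Z$ is an isomorphism of Zariski tangent spaces.

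Next, I would differentiate the identity $f\circ\angle=f$ at $x$ to obtain $df_x\circ d\angle_x=df_x$. Since $df_x$ is invertible, this forces $d\angle_x=\mathrm{id}_{T_xX}$. Because $df_x$ is an iso and $Z$ is smooth, $\dim T_xX=\dim T_{f(x)}Z=\dim Z=\dim X$, so the local ring of $X$ at $x$ is regular and $f$ is étale at $x$. At a smooth fixed point of an algebraic involution, the Zariski tangent space of the fixed-point scheme equals the $+1$-eigenspace of the differential; here that eigenspace is all of $T_xX$, so $X_\angle$ has local dimension equal to $\dim X$ at $x$. Using local irreducibility at the smooth point $x$, this forces an irreducible component of $X_\angle$ through $x$ to contain a Zariski-open neighborhood $U\subseteq X$ of $x$.

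Finally, since $f$ is étale at $x$, it is open there, so $f(U)$ contains a Zariski-open neighborhood of $f(x)$ in $Z$. But $f(U)\subseteq f(X_\angle)$, which implies $\dim f(X_\angle)=\dim Z$. This contradicts the standing hypothesis $\dim f(X_\angle)+2\le\dim Z$ from Lemma~\ref{L:simple}. Hence no regular point of $f$ can lie in $X_\angle$, completing the proof.

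The main obstacle is the differential-geometric step in the middle, where one needs to know that $X$ is smooth at a regular point (which follows from the tangent-space dimension equality above) and that the fixed scheme of an involution on a smooth variety has tangent space equal to the invariant subspace of the differential. The rest is bookkeeping about dimensions and the openness of étale maps.
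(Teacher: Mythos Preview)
Your proof is correct and takes a genuinely different route from the paper's. The paper argues geometrically: through $f(x)$ it chooses a smooth curve $C\subset Z$ meeting $f(X_\angle)$ only at $f(x)$ (possible by the codimension hypothesis), and observes that $\angle$ acts freely on $f^{-1}(C)$ in a punctured neighborhood of $x$, which forces at least two branches of $f^{-1}(C)$ to meet at $x$---hence $x$ is critical. Your argument is instead differential-algebraic: from $df_x\circ d\angle_x=df_x$ and invertibility of $df_x$ you deduce $d\angle_x=\mathrm{id}$, then linearize the involution at the (necessarily smooth) point $x$ to conclude that $X_\angle$ contains an open neighborhood of $x$, and finally use openness of the \'etale map $f$ near $x$ to contradict the codimension hypothesis on $f(X_\angle)$.

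Your approach is cleaner in that it avoids auxiliary curves and works entirely at the level of tangent spaces; it also makes transparent that codimension at least one in $Z$ (rather than two) already suffices for this Corollary. The paper's approach, by contrast, stays closer in spirit to the branch-counting argument of Lemma~\ref{L:simple} and directly exhibits the local multiplicity $\geq 2$ at each fixed point, which is the geometric mechanism underlying the mod~$4$ congruence.
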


In particular, this shows that the hypotheses on the points $y$ and $z$ in
Lemma~\ref{L:simple} are satisfied if they are not critical values of $f$.

\begin{proof}
 Given a point $x\in X_\angle$, consider a curve $C$ in $Z$ that contains $f(x)$, is smooth
 at $f(x)$, and only meets $f(X_\angle)$ at $f(x)$.
 Then $\angle$ acts fiberwise on the inverse image $f^{-1}(C)$ of $C$ in $X$.
 We have that $x$ is a fixed point and $\angle$ acts freely on other points in $f^{-1}(C)$
 lying in a neighborhood of $x$.
 This implies that at least two branches of $C$ come together at $x$ and proves the
 corollary.
\end{proof}

%
\section{A mod 4 congruence in the real Schubert calculus}\label{S:mod4}

Let $m\geq 2$ be an integer.
We work in a vector space $V\simeq\C^{2m}$ equipped with the ordered basis
$\be_0,\be_1,\dotsc,\be_{2m-1}$ whose dual space $V^*$
has dual basis
$\be_0^*,\dotsc,\be_{2m-1}^*$.
That is $(\be_i,\be_j^*)=\delta_{i,j}$ where \defcolor{$(\cdot,\cdot)$} is the pairing
between $V$ and $V^*$.
Let $\defcolor{g}\colon\C\to V$ be the rational normal curve
$g(t):=\sum_i \be_i t^i/i!$.
Evaluating an element $u=\sum_i u_i \be_i^*$ of $V^*$ on $g(t)$ gives a polynomial
$(g(t),u)=\sum_i u_i t^i/i!$, and this identifies $V^*$ with $\C_{2m-1}[t]$.
Under this identification, the canonical symplectic form  of
Subsection~\ref{S:alternating} becomes $\langle u,v\rangle= uJ^{-1}v$,where
$J^{-1}:=\sum_i(-1)^i \be_i\otimes\be_{2m-1-i}$ is an isomorphism $V^*\to V$.
Its inverse, $J\colon V\to V^*$, is $\sum_i (-1)^i\be_i^*\otimes\be_{2m-1-i}^*$, which
gives a symplectic form on $V$, $\langle p,q\rangle:=pJq$ that is dual to the form on
$V^*$.

Let $\defcolor{\eta}\in\mbox{End}(V)$ be the following tensor,
\[
    \eta\ :=\  \sum_{i=0}^{2m-2} \be_{i+1}\otimes\be_{i}^*\ .
\]
This is nilpotent, $\eta^{2m}=0$.
Since $\eta$ has trace zero, it lies in $\mathfrak{sl}_{2m}$, the Lie algebra of the
special linear group $\mbox{SL}(V)$.
Since $\eta^TJ+J\eta=0$, it also lies in
$\sp(V)$, the Lie algebra of the symplectic group
$\mbox{Sp}(V)$ (which is defined with respect to the form
$\langle\cdot,\cdot\rangle$ on $V$).

For $t\in\C$, define $\defcolor{F_m(t)}$ to be
 \begin{equation}\label{Eq:fmt}
    e^{\eta t}\Span\{\be_0,\dotsc,\be_{m-1}\}\,.
 \end{equation}
Since $\Span\{\be_0,\dotsc,\be_{m-1}\}$ is Lagrangian and as $\eta\in\sp(V)$, we have
$e^{\eta t}\in\Sp(V)$, this~\eqref{Eq:fmt} defines a curve in $\LG(m)\subset\Gr(m,V)$.
Taking the limit as $t\to\infty$ gives
$F_m(\infty)=\Span\{\be_{m},\dotsc,\be_{2m-1}\}$, and defines $F_m(t)$ for $t\in\P^1$.

For $H\in\Gr(m,V)$ there will be $m^2$ points $t$ of $\P^1$ (counted with
multiplicity) for which $H$ meets $F_m(t)$ nontrivially.
(We explain this below.)
That is, points $t$ such that $H\in X_{\Is}\Fdot(t)$,
where $\Fdot(t)$ is a flag extending $F_m(t)$.
Conversely, given $m^2$ points $t_1,\dotsc,t_{m^2}$ of $\P^1$, it is a problem in
enumerative geometry to ask how many $H\in\Gr(m,V)$
meet each linear subspace $F_m(t_1),\dotsc,F_m(t_{m^2})$ nontrivially.
By work of Schubert~\cite{Sch1886c} and of Eisenbud and Harris~\cite{EH}, there
are $\#^G_{m,m}$~(1)
such planes $W$, counted with multiplicity.
Eisenbud and Harris' contribution
is that despite the flags $\Fdot(t_i)$ not being in general position, the corresponding
intersection of Schubert varieties is zero-dimensional.

We may also pose a version of this enumerative problem on the Lagrangian Grassmannian.
Given generic points $t_1,\dotsc,t_{\binom{m+1}{2}}$ of $\P^1$, how many Lagrangian subspaces
$W\in\LG(V)$
meet each of $F_m(t_1),\dotsc,F_m(t_{\binom{m+1}{2}})$ nontrivially?
By results of~\cite[Cor.~3.6]{Hi82} (for the degree) and~\cite{So00c} (for finiteness)
there are
\[
   \defcolor{\#^\angle_m}\ :=
     2^{\binom{m}{2}}\frac{\binom{m{+}1}{2}!\cdot 1!\dotsc(m{-}1)!}{1!3!\dotsb(2m{-}1)!}
\]
such Lagrangian subspaces, counted with multiplicity.
More specifically, there exists a choice of
$t_1,\dotsc,t_{\binom{m+1}{2}}\in\R\P^1$ for which there are finitely many such Lagrangian
subspaces, and none of them are real.

The first problem in the Schubert calculus on $\Gr(m,V)$ may be recast in the language of
Section~\ref{S:lemma}.
Let $H\in\Gr(m,V)$ be a $m$-dimensional linear subspace of $V$.
Its annihilator, $H^\perp\subset V^*$, also has dimension $m$.
Let $f_1,\dotsc,f_m$ be a basis for $H^\perp$, which we consider to be polynomials in
$\C_{2m-1}[t]$.
Their Wronskian is the determinant
\[
   \defcolor{\Wr}(f_1,\dotsc,f_m)\ :=\ \det
     \left(\begin{array}{cccc}
      f_1(t)&f_2(t)&\dotsb&f_m(t)\\
      f'_1(t)&f'_2(t)&\dotsb&f'_m(t)\\
      \vdots&\vdots&\ddots&\vdots\\
      f_1^{(m-1)}(t)&f_2^{(m-1)}(t)&\dotsb&f_m^{(m-1)}(t)
     \end{array}\right)\ ,
\]
which is a polynomial of degree at most $m^2$.
This Wronskian depends on the subspace $H$ up to multiplication by nonzero scalars
(coming from different bases for $H^\perp$), and thus gives a well-defined element of
the projective space  $\P(\C_{m^2}[t])\simeq\P^{m^2}$.

This defines the Wronski map
 \begin{equation}\label{Eq:WronskiMap}
   \Wr\ \colon\ \Gr(m,V)\ \twoheadlongrightarrow\ \P(\C_{m^2}[t])\,.
 \end{equation}
To relate the Wronski map to the Schubert calculus in $\Gr(m,V)$ first note that the
rational normal curve $g(t)$ is equal to $e^{\eta t}.\be_0$.
Moreover, given a point $u=\sum_i u_i \be_i^*\in V^*$, its evaluation,
$(e^{\eta t}.\be_i,u)$, on $e^{\eta t}.\be_i$ is the $i$th derivative of the polynomial
$(e^{\eta t}.\be_0, u)=(g(t),u)$ corresponding to $u$.
As above, let $H\in\Gr(m,V)$ and suppose that $f_1,\dotsc,f_m\in \C_{2m-1}[t]$ are a basis for
$H^\perp$.
As shown in~\cite[p.\ 123]{IHP}, the zeroes of the Wronskian
$\Wr(f_1,\dotsc,f_m)$ are exactly those numbers $s$ where $H\cap F_m(s)\neq\{0\}$.

We restate and prove Theorem~\ref{Th:Wronski}.\medskip

\noindent{\bf Theorem~\ref{Th:Wronski}.}
 {\it
  Suppose that $m\geq 3$.
  If\/ $\Phi$ is any real polynomial of degree at most $m^2$, then the number of real
  subspaces in $\Gr(m,\R_{2m-1}[t])$ whose Wronskian is proportional to $\Phi$ is congruent
  to $\#^G_{m,m}$ modulo four.

  Equivalently, given any subset $T=\{t_1,\dotsc,t_{m^2}\}$ of\/ $\P^1$ which is stable under
  complex conjugation, $\overline{T}=T$, the number of real subspaces $H\in \Gr(m,V)$
  whose complexifications meet each of $F_m(t_1),\dotsc,F_m(t_{m^2})$ nontrivially is congruent
  to $\#^G_{m,m}$ modulo four, where each point is counted with its algebraic
  multiplicity.}\medskip 

We state the key lemma of this paper.

\begin{lemma}\label{L:commute}
  The Wronski map commutes with the Lagrangian involution on $\Gr(m,V)$.
  That is,
  for $H\in\Gr(m,V)$,
\[
   \Wr(H^\angle)\ =\ \Wr(H)\,.
\]
\end{lemma}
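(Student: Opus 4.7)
The plan is to prove the identity $\Wr(H^\angle) = \Wr(H)$ by first verifying it on a Zariski-dense open subset of $\Gr(m,V)$ via a Schubert description of generic fibers, then extending it to all $H$ by continuity.

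The first step is the following linear-algebra lemma: for any $H \in \Gr(m,V)$ and any Lagrangian $L \subset V$, one has $\dim(H \cap L) = \dim(H^\angle \cap L)$. Indeed, the standard identity $(H \cap L)^\angle = H^\angle + L^\angle$, together with $L^\angle = L$ and $\dim A + \dim A^\angle = 2m$, yields
$$2m - \dim(H \cap L) \;=\; \dim(H^\angle + L) \;=\; 2m - \dim(H^\angle \cap L).$$
Since $\Span\{\be_0,\dots,\be_{m-1}\}$ is Lagrangian and $\eta \in \sp(V)$, each $F_m(s) = e^{\eta s}\Span\{\be_0,\dots,\be_{m-1}\}$ is Lagrangian, so applying the lemma with $L = F_m(s)$ gives: $H \cap F_m(s) \neq \{0\}$ if and only if $H^\angle \cap F_m(s) \neq \{0\}$.

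Next, for a polynomial $\Phi$ of degree exactly $m^2$ with distinct roots $s_1,\dots,s_{m^2} \in \C$, I would identify $\Wr^{-1}(\Phi)$ with the intersection $\bigcap_{i=1}^{m^2}\{H \in \Gr(m,V) : H \cap F_m(s_i) \neq \{0\}\}$. The containment $\subseteq$ follows from the paper's description of the zero set of $\Wr(H)$; the reverse inclusion holds because $\deg\Wr(H) \leq m^2 = \deg \Phi$, so divisibility of $\Wr(H)$ by $\prod_i (t - s_i)$ forces $\Wr(H)$ to be a scalar multiple of $\Phi$. By the previous paragraph this intersection is stable under $\angle$, so $\Wr(H^\angle) = \Wr(H) = \Phi$ for every $H$ in such a fiber.

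The set of polynomials of degree $m^2$ with distinct roots in $\C$ is Zariski-dense in $\P(\C_{m^2}[t])$, and since $\Wr$ is a finite dominant morphism, its preimage is Zariski-dense in $\Gr(m,V)$. As $\Wr$ and $\Wr \circ \angle$ are morphisms to the separated variety $\P(\C_{m^2}[t])$ that agree on this dense open set, they coincide on all of $\Gr(m,V)$. The main substantive input is the Lagrangian-intersection identity of the second paragraph; the remainder is a routine density argument, with the only minor subtlety being to accommodate roots at $\infty$ via $F_m(\infty) = \Span\{\be_m,\dots,\be_{2m-1}\}$, which is likewise Lagrangian.
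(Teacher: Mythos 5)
Your proof is correct and takes essentially the same approach as the paper: reduce by density to the case of simple Wronskian roots, observe that each $F_m(s)$ is Lagrangian because $e^{\eta s}\in\Sp(V)$, and deduce that $H$ and $H^\angle$ meet the same $F_m(s)$ nontrivially. You helpfully fill in two details the paper leaves implicit — the linear-algebra identity $\dim(H\cap L)=\dim(H^\angle\cap L)$ for Lagrangian $L$, and the precise justification that a polynomial with $m^2$ distinct roots pins down the fiber — but the argument is the same.
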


\begin{proof}[Proof of Theorem~$\ref{Th:Wronski}$.]
 The Wronski map~\eqref{Eq:WronskiMap} is a
 finite map from $\Gr(m,V)$ to $\P(\C_{m^2}[t])$~\cite{EH}, both of which are irreducible
 smooth varieties. 
 (This implies that it is proper and dominant.)
 Since $\LG(V)=\Gr(m,V)_\angle$ and $\dim\LG(V)=\frac{1}{2}(m^2+m)$, we see that the
 image $\Wr(\LG(V))$ in $\P(C_{m^2}[t])$ has codimension at least two when $m\geq 3$.
 By Lemma~\ref{L:commute} the involution $\angle$ on $\Gr(m,V)$ commutes with $\Wr$.
 Since the set of real points of $\P(\C_{m^2}[t])$ (which is $\P(\R_{m^2}[t])$) are
 connected, Lemma~\ref{L:simple} implies that the number of real points in any two fibers
 above real polynomials are congruent modulo four.
 The congruence to $\#^G_{m,m}$ follows as there is a real polynomial $\Phi(t)$ with
 $\#^G_{m,m}$ real points in $\Wr^{-1}(\Phi)$~\cite{So99}.
\end{proof}

\begin{proof}[Proof of Lemma~$\ref{L:commute}$.]
 By continuity, it suffices to show this for $H$ whose
 Wronskian has only simple roots, as this set is dense in $\Gr(m,V)$.
 Since $\eta\in\sp(V)$, we have $e^{\eta t}\in\Sp(V)$.
 As $F_m(t)=e^{\eta t}.F_m(0)$ and $F_m(0)$ is Lagrangian, we see that if $H\in\Gr(m,V)$
 meets $F_m(t)$ nontrivially, then so does $H^\angle$.
 This implies that $\Wr(H)=\Wr(H^\angle)$.
\end{proof}

%
\section{A congruence modulo four in the symmetric Schubert calculus}\label{S:symmetric}

We retain the notation and definitions of Section~\ref{S:mod4} and
extend Theorem~\ref{Th:Wronski} to more general Schubert problems.

We defined the Lagrangian involution $\angle$ in Subsection~\ref{S:involution}.
For a linear subspace $K$ of $V$, recall that $K^\angle$ is its annihilator with
respect to the form $\langle\cdot,\cdot\rangle$.
If $\Fdot$ is a flag, then
\[
   \defcolor{\Fdot^\angle}\ \colon\
     (F_{2m-1})^\angle\ \subset\ (F_{2m-2})^\angle\ \subset\ \dotsb\
      \subset (F_2)^\angle\ \subset\ (F_1)^\angle\ \subset\ V
\]
is also a flag.
Furthermore, $\Fdot=\Fdot^\angle$ if and only if $\Fdot$ is isotropic.

For a partition $\lambda$, its \demph{transpose}, \defcolor{$\lambda^\angle$}, is
given by the matrix transpose of Young diagrams,
\[
   \ThTT^\angle\ =\ \;
   \raisebox{-7pt}{\includegraphics{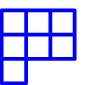}}
  \qquad\mbox{and}\qquad
   \raisebox{-7.5pt}{\includegraphics{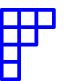}}^\angle\ =\ \;
   \raisebox{-5pt}{\includegraphics{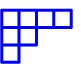}}\ .
\]
A partition $\lambda$ is symmetric if it equals it transpose, $\lambda=\lambda^\angle$.

\begin{lemma}\label{L:lagr-action}
 Let $\lambda$
 be a partition
 with $m\geq \lambda_1$ and $\lambda_{m+1}=0$
 and $\Fdot$ be a flag.
 Then
\[
  \angle \left(X_\lambda\Fdot\right)\ =\ X_{\lambda^\angle}\Fdot^\angle\,.
\]
\end{lemma}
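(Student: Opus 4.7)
The plan is to translate the Schubert incidence conditions defining $X_\lambda\Fdot$ through the annihilator map, using a single linear-algebra identity, and then recognize the resulting inequalities as the ones defining $X_{\lambda^\angle}\Fdot^\angle$.

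The starting point is a standard duality for a nondegenerate alternating form on $V$ of dimension $2m$: for any two subspaces $K,L\subset V$ one has $(K\cap L)^\angle = K^\angle + L^\angle$, together with $\dim K^\angle = 2m-\dim K$. Applied with $K=H\in\Gr(m,V)$ and $L=F_a$, this yields the dimension formula
\[
   \dim(H\cap F_a)\;=\;\dim(H^\angle\cap F_a^\angle)\,+\,a-m\,.
\]
In particular, $\dim(H\cap F_a)\geq i$ is equivalent to $\dim(H^\angle\cap F_a^\angle)\geq i+m-a$.

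I then substitute $a=m+i-\lambda_i$ into the defining inequalities of $X_\lambda\Fdot$. The $i$-th incidence $\dim(H\cap F_{m+i-\lambda_i})\geq i$ becomes $\dim(H^\angle\cap F_{m+i-\lambda_i}^\angle)\geq\lambda_i$, and by the very definition of the dual flag $F_a^\angle = (\Fdot^\angle)_{2m-a}$. Thus $H\in X_\lambda\Fdot$ is equivalent to
\[
   \dim\bigl(H^\angle\cap(\Fdot^\angle)_{m-i+\lambda_i}\bigr)\,\geq\,\lambda_i\qquad\text{for }i=1,\dotsc,m.
\]

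The remaining step is combinatorial: I must match this system of inequalities with the defining inequalities $\dim(H^\angle\cap(\Fdot^\angle)_{m+j-\mu_j})\geq j$ of $X_{\lambda^\angle}\Fdot^\angle$, where $\mu=\lambda^\angle$. Both lists are redundant and need only be checked at the corners of the Young diagram of $\lambda$, i.e.\ at indices $i$ with $\lambda_i>\lambda_{i+1}$ (using $\lambda_{m+1}:=0$). At such a corner, set $j=\lambda_i$; then $\mu_j=\#\{k:\lambda_k\geq j\}=i$, so $m-i+\lambda_i = m+j-\mu_j$ and $\lambda_i=j$, and the two conditions coincide verbatim. Corners of $\lambda$ are in bijection with corners of $\lambda^\angle$ under $i\leftrightarrow j$, accounting for every essential inequality on both sides. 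The noncorner inequalities in either system follow from the corner ones, because $\dim(H^\angle\cap(\Fdot^\angle)_b)\geq\dim(H^\angle\cap(\Fdot^\angle)_a)-(a-b)$ for $b\leq a$.

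The main piece of bookkeeping is the corner correspondence in the final paragraph; everything else is a two-line dimension count. I expect no genuine obstacle beyond setting up this bookkeeping cleanly.
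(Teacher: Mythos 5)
Your proof is correct, but it takes a different route from the paper's. The paper avoids your corner-matching argument by first recasting the Schubert conditions in the redundant form $\dim H\cap F_i\geq\#(a(\lambda)\cap[i])$ for all $i=1,\dots,2m$, where $a(\lambda)=\{m+1-\lambda_1,\dots,m+m-\lambda_m\}\in\binom{[2m]}{m}$, and then applies the duality index-by-index: $\dim H\cap F_i\geq\#(\alpha\cap[i])$ is converted to $\dim(\mathrm{span}\{H,F_i\})^\angle\geq\#(\alpha^\angle\cap[2m-i])$, with $(\mathrm{span}\{H,F_i\})^\angle=H^\angle\cap F^\angle_{2m-i}$, where $\alpha^\angle=\{2m+1-i:i\notin\alpha\}$. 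The combinatorial content is then isolated in the single identity $a(\lambda)^\angle=a(\lambda^\angle)$. You instead work with the original $m$ conditions indexed by the rows of $\lambda$ and directly translate each through $\dim(H\cap F_a)=\dim(H^\angle\cap F_a^\angle)+a-m$ (a clean consequence of $(K\cap L)^\angle=K^\angle+L^\angle$), which requires the corner-bijection bookkeeping at the end to identify the two $m$-condition systems. Both arguments are sound; yours is perhaps more elementary and self-contained, while the paper's subset reformulation removes the need to track which inequalities are essential by making the set of conditions literally invariant under the involution.
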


\begin{proof}
 Write \defcolor{$\binom{[2m]}{m}$} for the set of all subsets of $[2m]$ of cardinality $m$.
 To a partition $\lambda$, we associate an element of $\binom{[2m]}{m}$,
\[
   a(\lambda)\ :=\ \{m+1-\lambda_1, m+2-\lambda_2,\dotsc, m+m-\lambda_m\}\,.
\]
 We define an involution on  $\binom{[2m]}{m}$,
\[
  \binom{[2m]}{m}\ni \alpha\ \longmapsto\
   \defcolor{\alpha^\angle}\ :=\ \{2m+1-i\mid i\not\in\alpha\}\,.
\]
 It is a pleasing combinatorial exercise to show that
\[
   a(\lambda)^\angle\ =\ a(\lambda^\angle)\,.
\]

 To prove the lemma, we use the following reformulation of the
 definition~\eqref{Eq:SchubertVariety} of a Schubert variety:
 \[
   X_\lambda\Fdot\ =\ \{H\in\Gr(m,V)\mid \dim H\cap F_i\geq
     \#(a(\lambda)\cap[i]),\quad i=1,\dotsc,2m\}\,.
\]
 We have the following chain of equivalences for $\alpha\in\binom{[2m]}{m}$ and flags
 $\Fdot$.
 \begin{multline*}
   \dim H\cap F_i\ \geq\ \#(\alpha\cap[i])\\
  \Leftrightarrow\
   \makebox[330pt][l]{$\dim\Span\{ H,F_i\}\ \leq\ m+i-\#(\alpha\cap[i])\ =\
    i+\#(\alpha\cap\{i{+}1,\dotsc,2m\})$}\\
  \Leftrightarrow\
    \makebox[372pt][l]{$\dim\Span\{ H,F_i\}^\angle\ \geq
      2m-i-\#(\alpha\cap\{i{+}1,\dotsc,2m\})\,.$}
 \end{multline*}
 But we have $(\Span\{ H,F_i\})^\angle=H^\angle\cap(F_i)^\angle=H^\angle\cap F^\angle_{2m-i}$
 and
 \[
   2m-i-\#(\alpha\cap\{i{+}1,\dotsc,2m\})\ =\ \#(\alpha^\angle\cap[2m{-}i])\,,
 \]
 which completes the proof.
\end{proof}

\begin{corollary}\label{Co:symmetric}
 Let $\Fdot$ be a isotropic flag and $\lambda$ a partition.
 Then $\angle(X_\lambda\Fdot)=X_\lambda\Fdot$ if and only if $\lambda$ is symmetric.
 When $\lambda$ is symmetric, the set of fixed points $(X_\lambda\Fdot)_\angle$ of
 $X_\lambda\Fdot$ is $Y_\lambda\Fdot$.
\end{corollary}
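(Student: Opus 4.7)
The plan is to derive this as a straightforward consequence of Lemma~\ref{L:lagr-action}, using only the definition of an isotropic flag and the standard fact that distinct partitions yield distinct Schubert varieties for a fixed flag.

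First I would handle the equivalence. Since $\Fdot$ is isotropic we have $F_i^\angle = F_{2m-i}$, so $\Fdot^\angle = \Fdot$ by the definition of $\Fdot^\angle$. Applying Lemma~\ref{L:lagr-action} then gives
\[
   \angle\bigl(X_\lambda\Fdot\bigr)\ =\ X_{\lambda^\angle}\Fdot^\angle\ =\ X_{\lambda^\angle}\Fdot\,.
\]
The backward direction of the equivalence is then immediate: if $\lambda^\angle=\lambda$, the right-hand side equals $X_\lambda\Fdot$. For the forward direction I would invoke that the Schubert varieties $X_\mu\Fdot$ for a fixed flag $\Fdot$ are distinguished by $\mu$. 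Concretely, using the reformulation in the proof of Lemma~\ref{L:lagr-action}, the open Schubert cell
\[
   X^\circ_\mu\Fdot\ :=\ \{H\in\Gr(m,V)\mid \dim H\cap F_i = \#(a(\mu)\cap[i])\,,\ i=1,\dotsc,2m\}
\]
recovers $\mu$ from $H$, and $X_\mu\Fdot$ is the closure of the unique such cell; thus $X_{\lambda^\angle}\Fdot = X_\lambda\Fdot$ forces $\lambda^\angle = \lambda$.

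For the second assertion, recall from Subsection~\ref{S:involution} that the fixed point locus of $\angle$ acting on $\Gr(m,V)$ is exactly $\LG(V)$. Therefore
\[
   (X_\lambda\Fdot)_\angle\ =\ X_\lambda\Fdot \cap \LG(V)\,,
\]
and when $\lambda$ is symmetric this is the very definition of $Y_\lambda\Fdot$ given earlier in the excerpt.

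There is no real obstacle: the content lies entirely in Lemma~\ref{L:lagr-action} and in the set-theoretic identification of fixed loci, so the proof amounts to stringing together three one-line observations. The only subtle point worth spelling out carefully is the injectivity of $\mu \mapsto X_\mu\Fdot$, which I would justify via the Schubert cell decomposition as above.
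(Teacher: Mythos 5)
Your proposal is correct and is essentially the derivation the paper intends: the corollary follows directly from Lemma~\ref{L:lagr-action} once one observes that $\Fdot^\angle=\Fdot$ for an isotropic flag, that $\mu\mapsto X_\mu\Fdot$ is injective (via the cell decomposition), and that the fixed locus of $\angle$ on $\Gr(m,V)$ is $\LG(V)$, so that $(X_\lambda\Fdot)_\angle = X_\lambda\Fdot\cap\LG(V) = Y_\lambda\Fdot$ by definition.
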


In Section~\ref{S:mod4} we defined a family $F_m(t)$ for $t\in\P^1$ of Lagrangian
subspaces of $V$.
We extend this to a family of isotropic flags.
For $j=1,\dotsc,2m$ and $t\in\C$ define
\[
  \defcolor{F_j}\ :=\ \Span\{\be_0,\dotsc,\be_{j-1}\}
  \qquad\mbox{and}\qquad
  \defcolor{F_j(t)}\ :=\ e^{\eta t} F_j\,.
\]
As $\langle F_j,F_{2m-j}\rangle=0$, the subspaces $F_j$ define an
isotropic flag, $\Fdot$.
Since $e^{\eta t}\in\Sp(V)$, the subspaces $F_j(t)$ also form an isotropic flag $\Fdot(t)$
with $\Fdot(0)=\Fdot$.
Taking the limit as $t\to\infty$
shows
\[
   F_j(\infty)\ =\ \Span\{\be_{2m-j},\dotsc,\be_{2m-1}\}\,,
\]
so that $\Fdot(\infty)$ is isotropic.
This family of flags has the property that if $a$ $b$ are distinct points of $\P^1$, then
$\Fdot(a)$ is in linear general position with respect to $\Fdot(b)$.
(By linear general position we mean that 
  $\dim F_i(a)\cap F_j(b)=\max\{0,i{+}j{-}2m\}$).\smallskip

Let $\blambda=(\lambda^1,\dotsc,\lambda^n)$ be a Schubert problem.
We consider a family of all corresponding intersections of Schubert varieties
given by flags $\Fdot(t)$ for $t\in\P^1$.
Write \defcolor{$(\P^1)^n_{\neq}$} for the set of $n$-tuples of distinct points.
Let $\defcolor{X_{\blambda}}\subset\Gr(m,V)\times(\P^1)^n$ be the closure of the incidence
variety
\[
   \defcolor{X^\circ_{\blambda}}\ :=\
   \{ (H,t_1,\dotsc,t_n) \mid (t_1,\dotsc,t_n)\in(\P^1)^n_{\neq} \mbox{ and }
         H\in X_{\lambda^i}\Fdot(t_i)\ i=1,\dotsc,n\}\,.
\]
Let $f\colon X_{\blambda}\to(\P^1)^n$ be the projection.
Since for $\bt=(t_1,\dotsc,t_n)\in(\P^1)^n_{\neq}$, we have
 \begin{equation}\label{Eq:total_family}
   f^{-1}(\bt)\ =\
   X_{\lambda^1}\Fdot(t_1)\,\cap\,
   X_{\lambda^2}\Fdot(t_2)\,\cap\, \dotsb\, \cap\,
   X_{\lambda^n}\Fdot(t_n)\,,
 \end{equation}
this family $f\colon X_{\blambda}\to(\P^1)^n$ contains all instances of the Schubert problem
$\blambda$ given by flags $\Fdot(t)$.

\begin{lemma}\label{L:Xblambda}
  For any Schubert problem $\blambda$ on $\Gr(m,V)$, the map
  $f\colon X_{\blambda}\to(\P^1)^n$ is finite and has degree $d(\blambda)$.
\end{lemma}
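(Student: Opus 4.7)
The proof decomposes into three steps: establishing properness, computing the generic degree, and then upgrading to finiteness.

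First, properness is immediate: $X_\blambda$ is a closed subvariety of $\Gr(m,V) \times (\P^1)^n$ by construction, and projection onto the second factor is proper because $\Gr(m,V)$ is complete. Restricting to $X_\blambda$ yields a proper map $f$.

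Next, I would compute the generic degree over the open set $(\P^1)^n_{\neq}$, where the fiber of $f$ over $\bt$ is given by~\eqref{Eq:total_family}. Since the product of Schubert classes $[X_{\lambda^1}]\cdots[X_{\lambda^n}]$ equals $d(\blambda)[\mathrm{pt}]$ in $H^*(\Gr(m,V))$, this scheme-theoretic intersection has length at most $d(\blambda)$ whenever it is zero-dimensional. The subtlety is that the osculating flags $\Fdot(t_i)$ form only a one-parameter family, so Kleiman's transversality theorem does not directly apply. However, transversality of Schubert intersections for generic osculating parameters is a known consequence of either the Eisenbud--Harris analysis~\cite{EH} (in the Wronski case) or of the Mukhin--Tarasov--Varchenko theorem~\cite{mtv1,mtv2} (in general), so I would invoke this to conclude that $f$ has degree $d(\blambda)$ over a dense open set.

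Third, I would upgrade to finiteness by ruling out positive-dimensional components of fibers over the complement of this open set, in particular over points $\bt$ where some of the $t_i$ coincide. The prototype is the Wronski case $\lambda^i=\I$ for all $i$, handled by Eisenbud and Harris~\cite{EH}. For general $\blambda$ the analogous statement is that the scheme-theoretic intersection $\bigcap_i X_{\lambda^i}\Fdot(t_i)$ remains zero-dimensional for every $\bt \in (\P^1)^n$. Combined with properness, this forces $f$ to be finite, and comparing with the generic count pins the degree at $d(\blambda)$.

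The main obstacle is the last step, because a priori the closure $X_\blambda$ could contain extra components supported entirely over the collision locus in $(\P^1)^n$. The argument must exploit the specific rational-normal structure, namely the one-parameter group $e^{\eta t}$ controlling how $\Fdot(t_i)$ degenerates as $t_i \to t_j$, rather than any abstract general-position principle. In practice one either cites or adapts the osculating finiteness results for the Grassmannian, which rest on a careful analysis of Schubert intersections in local coordinates around the rational normal curve.
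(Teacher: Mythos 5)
Your proposal correctly decomposes the problem (properness, generic degree, finiteness over the collision locus) and correctly identifies the third step as the crux, but the way you propose to close it contains a real error. You assert that ``the scheme-theoretic intersection $\bigcap_i X_{\lambda^i}\Fdot(t_i)$ remains zero-dimensional for every $\bt \in (\P^1)^n$.'' This is false over the collision locus: if all $t_i$ equal a single $t_0$, the intersection is just $\bigcap_i X_{\lambda^i}\Fdot(t_0)$, which is a single Schubert variety (for the largest of the $\lambda^i$), typically of positive dimension. What must be shown finite is the fiber of the \emph{closure} $X_{\blambda}$ over $\bt$, which is a priori only a subset of that intersection. Recognizing that ``the closure could contain extra components supported over the collision locus'' and then appealing generically to ``osculating finiteness results'' names the difficulty without resolving it.

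The paper's proof supplies the reduction you are missing, and it is quite concrete. Define $\chi\colon(\P^1)^n\to\P(\C_{m^2}[t])$ by $(t_1,\dotsc,t_n)\mapsto \prod_i(t-t_i)^{|\lambda^i|}$, and let $X_*\to(\P^1)^n$ be the base change of the Wronski map $\Wr\colon\Gr(m,V)\to\P(\C_{m^2}[t])$ along $\chi$. Finiteness of $\Wr$ (Eisenbud--Harris, which you do cite) gives finiteness of $X_*\to(\P^1)^n$ immediately, since finite morphisms are stable under base change. One then checks, using Purbhoo's set-theoretic description of the Wronski fibers over $(\P^1)^n_{\neq}$, that $X^\circ_{\blambda}\subseteq X_*$; taking closures gives $X_{\blambda}\subseteq X_*$, and a closed subscheme of a scheme finite over the base is finite over the base. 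This handles the collision locus automatically and avoids ever needing the (false) zero-dimensionality of the raw intersection. Your steps one and two are fine; step three needs to be replaced by this pullback argument or an equivalent.
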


If the osculating flags in~\eqref{Eq:total_family} were in general position so that the
Schubert varieties met transversally, then $d(\blambda)$ is the number of points in the
intersection. 
The point of this Lemma is that even though  osculating flags are not general, we still
have that $d(\blambda)$ is the number of points in the
intersection~\eqref{Eq:total_family}. 

\begin{remark}
  Speyer~\cite{Speyer} constructed and studied a more refined
  compactification of $X^\circ_{\blambda}$.
  Since $\PGL$ acts on $\P^1$, and through the one-parameter subgroup $e^{\eta t}$
  it acts on $V$ and on $\Gr(m,V)$, we have that $\PGL$ acts on the family
  $X^\circ_{\blambda}\to(\P^1)^n_{\neq}$.
  The orbit space is a family of Schubert problems over
  $M_{0,n}:=(\P^1)^n_{\neq}/\PGL$, the open moduli space of $n$
  marked points on $\P^1$, which Speyer extended to a family over its
  compactification $\overline{M}_{0,n}$.
\end{remark}

\begin{proof}[Proof of Lemma~$\ref{L:Xblambda}$]
 Let us assume that $d(\blambda)\neq 0$ for otherwise $X_{\blambda}=\emptyset$.
 Consider the map $\chi\colon(\P^1)^n\to\P(\C_{m^2}[t])$ given by
\[
   \defcolor{\chi}\ \colon\ (t_1,\dotsc,t_n)\ \longmapsto\
     (t-t_1)^{|\lambda^1|} (t-t_2)^{|\lambda^2|}\dotsb  (t-t_n)^{|\lambda^n|}\,,
\]
 and let $\defcolor{X_*}\to (\P^1)^n$ be the pullback of the Wronski map
\[
   \Wr\ \colon\ \Gr(m,V)\ \longrightarrow\ \P(\C_{n^2}[t])
\]
 along the map $\chi$.

 Since the Wronski map is finite (it is a surjective map of smooth complete varieties with
 finite fibers), the map  $X_*\to (\P^1)^n$ is finite.
 Purbhoo studied the fibers of the Wronski map~\cite{Pu}.
 The set-theoretic fiber over a polynomial $g\in\P(\C_{n^2}[t])$ is
\[
    \bigcap_{\{s\colon g(s)=0\}} \bigcup_{\{\nu\colon |\nu|=\ord_s g\}} X_\nu \Fdot(s)\,.
\]
 In the scheme-theoretic fiber, the Schubert variety $X_\nu \Fdot(s)$ in this intersection
 is not reduced; it has multiplicity equal to the number of standard Young tableaux of shape
 $\nu$.
 Thus the set-theoretic fiber of $X_*$ over a point $(t_1,\dotsc,t_n)\in (\P^1)^n_{\neq}$ is
\[
    \bigcap_{i=1}^n \bigcup_{\{\nu\colon |\nu|=|\lambda^i|\}} X_\nu \Fdot(t_i)\,.
\]
 It follows that $X_{\blambda}$ is a union of irreducible components of $X_*$ and so
 $X_{\blambda}\to (\P^1)^n$ is finite.
\end{proof}

We call a Schubert problem $\blambda=(\lambda^1,\dotsc,\lambda^n)$ \demph{symmetric} if
each partition $\lambda^i$ is symmetric.
By Lemma~\ref{L:lagr-action}, if $\blambda$ is a symmetric Schubert problem, then the fibers
of $X_{\blambda}\to(\P^1)^n$  are preserved by the Lagrangian involution.
We would like to
show
that the number of real points in a fiber is congruent to $d(\blambda)$ modulo
four.
Unfortunately, $X_{\blambda}\to(\P^1)^n$ is not the right family for this.
If $(t_1,\dotsc,t_n)$ is a real point in $(\P^1)^n_{\neq}$, then each $t_i$ is
real and the Mukhin-Tarasov-Varchenko Theorem~\cite{mtv1,mtv2} states that the
intersection~\eqref{Eq:total_family} consists of $d(\blambda)$ real points, so that the
desired congruence to $d(\blambda)$ modulo four is not interesting.

The problem with the family $X_{\blambda}\to(\P^1)^n$ is that the points $t_1,\dotsc,t_n$
are distinguishable as they are labeled. 
Since $\overline{F_i(t)}=F_i(\overline{t})$, we have
 \begin{equation}\label{Eq:SVarConj}
    \overline{X_\lambda\Fdot(t)}\ :=\
    \{ \overline{H} \mid H\in X_\lambda\Fdot(t) \}\ =\
    X_\lambda\Fdot(\overline{t})\,,
 \end{equation}
and so there may be fibers of $X_{\blambda}\to(\P^1)^n$ which are real (stable under complex
conjugation), for which the corresponding ordered $n$-tuple $(t_1,\dotsc,t_n)$ is not
fixed by complex conjugation.
To remedy this, we replace $(\P^1)^n$ by a space in which the points $t_i$ and $t_j$ of
$\P^1$ are indistinguishable when $\lambda^i=\lambda^j$.

Our remedy uses an alternative representation of a symmetric Schubert problem which
records the frequency of the different partitions.
Suppose that $\{\mu^1,\dotsc,\mu^r\}$ are the distinct partitions appearing in a Schubert
problem $\blambda$ so that $\mu^i\neq\mu^j$ for $i\neq j$, and $\mu^i$ occurs $n_i>0$
times in $\blambda$.
In that case, we write $\defcolor{\bmu}=\{(\mu^1,n_1),\dotsc,(\mu^r,n_r)\}$ for
the Schubert problem.
For example,
\[
  \left\{
   \bigl(\,\TT,2\bigr)\,,\,
   \bigl(\,\TI,4\bigr)\,,\,
   \bigl(\,\I,5\bigr)
  \right\}
  \qquad\mbox{and}\qquad
  \left\{
   \bigl(\,\ThTI,1\bigr)\,,\,
   \bigl(\,\TI,5\bigr)\,,\,
   \bigl(\,\I,4\bigr)
  \right\}
\]
are symmetric Schubert problems on $\Gr(5,\C^{10})$.
We may write $\bmu=\defcolor{\bmu(\blambda)}$ to indicate that $\bmu$ and $\blambda$ are the
same Schubert problem and define $d(\bmu):=d(\blambda)$ when this occurs.

Given a symmetric Schubert problem in this form
$\bmu=\{(\mu^1,n_1),\dotsc,\{\mu^r,n_r)\}$, set
\[
   \defcolor{Z_{\bmu}}\ :=\
   \{ (u_1,u_2,\dotsc,u_r)\mid u_i\in\P(\C_{n_i}[t])\}
    \ \simeq\ \prod_{i=1}^r \P^{n_i}\,,
\]
and let \defcolor{$U_{\bmu}$} be the subset consisting of $(u_1,\dotsc,u_r)\in Z_{\bmu}$
where each polynomial $u_i$ is square-free and any two are relatively prime
(i.e.\ the roots of $u_1\dotsb u_r$ all have multiplicity one).
Let $\defcolor{X_{\bmu}}\subset \Gr(m,V)\times Z_{\bmu}$ be the closure of
the incidence correspondence
\[
   \{(H\,,\,u_1,\dotsc,u_r)\mid (u_1,\dotsc,u_r)\in U_{\bmu}
     \mbox{\ and\ }H\in X_{\mu^i}(s)\mbox{\ for\ }u_i(s)=0\,, i=1,\dotsc,r\}\,.
\]
Let $f\colon X_{\bmu}\to Z_{\bmu}$ be the projection.
Since for $\bu=(u_1,\dotsc,u_r)\in U_{\bmu}$, we have
 \begin{equation}\label{Eq:fibersofbmu}
     f^{-1}(\bu)\ =\
     \bigcap_{i=1}^r \ \bigcap_{\{s\colon u_i(s)=0\}} X_{\mu^i}\Fdot(s)\,,
 \end{equation}
the family $f\colon X_{\bmu}\to Z_{\bmu}$ is another family containing all instances of
the Schubert problem $\bmu$ given by flags $\Fdot(t)$.

By~\eqref{Eq:SVarConj} an intersection~\eqref{Eq:fibersofbmu} is stable under complex
conjugation exactly when the roots of $u_i$ are stable under complex conjugation.
That is, exactly when each $u_i$ is a real polynomial, so that
$(u_1,\dotsc,u_r)\in U_{\bmu}(\R)$.

\begin{lemma}\label{L:finite}
  The map $f\colon X_{\bmu}\to Z_{\bmu}$ is finite.
\end{lemma}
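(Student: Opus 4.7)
My strategy is to mirror the proof of Lemma~\ref{L:Xblambda} by factoring $f\colon X_{\bmu}\to Z_{\bmu}$ through the Wronski map, which is already known to be finite.  I would first introduce the auxiliary regular map
\[
   \chi\ \colon\ Z_{\bmu}\ \longrightarrow\ \P(\C_{m^2}[t])\,,\qquad
     (u_1,\dotsc,u_r)\ \longmapsto\ u_1^{|\mu^1|}u_2^{|\mu^2|}\dotsb u_r^{|\mu^r|}\,.
\]
The image polynomial has degree $\sum_{i=1}^r n_i|\mu^i|=m^2$ because $\bmu$ is a Schubert problem on $\Gr(m,V)$ with $V\simeq\C^{2m}$, so $\chi$ is well-defined.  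Let $X_*\to Z_{\bmu}$ be the pullback of $\Wr\colon\Gr(m,V)\to\P(\C_{m^2}[t])$ along $\chi$.  Since $\Wr$ is a finite map of smooth irreducible complete varieties, its pullback $X_*\to Z_{\bmu}$ is again finite.

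Next I would compare $X_{\bmu}$ with $X_*$ over the dense open locus $U_{\bmu}$.  For $\bu\in U_{\bmu}$, the polynomials $u_i$ are pairwise coprime and square-free, so at each root $s$ of $u_i$ the polynomial $\chi(\bu)$ vanishes to order exactly $|\mu^i|$.  Applying Purbhoo's description~\cite{Pu} of the set-theoretic fibers of the Wronski map (already invoked in the proof of Lemma~\ref{L:Xblambda}), the fiber of $X_*$ over $\bu$ equals
\[
    \bigcap_{i=1}^r\ \bigcap_{\{s\colon u_i(s)=0\}}\ \bigcup_{\{\nu\colon |\nu|=|\mu^i|\}} X_\nu \Fdot(s)\,.
\]
Comparing with~\eqref{Eq:fibersofbmu}, the fiber $f^{-1}(\bu)$ is exactly the sub-intersection obtained by selecting the single term $\nu=\mu^i$ in each union, so $f^{-1}(\bu)$ is contained in the fiber of $X_*$ at $\bu$.

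To conclude, $X_{\bmu}$ is by definition the closure inside $\Gr(m,V)\times Z_{\bmu}$ of its restriction to $U_{\bmu}$, and $X_*$ is a closed subvariety containing this restriction, so the closure lies inside $X_*$.  Hence $X_{\bmu}$ is a union of irreducible components of the finite scheme $X_*$ over $Z_{\bmu}$, and consequently $f$ is itself finite.  The main delicate point I anticipate is verifying that no component of $X_{\bmu}$ escapes $X_*$ along the boundary $Z_{\bmu}\setminus U_{\bmu}$, where the $u_i$ develop multiple or common roots; this should follow automatically, since any limit of points in the fibers of $X_{\bmu}^\circ$ lies in the (finite) fiber of $X_*$ over the limiting $\bu$, so the fibers of $f$ remain finite on all of $Z_{\bmu}$.
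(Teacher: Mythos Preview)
Your proposal is correct and follows essentially the same approach as the paper: the paper's proof simply notes that one repeats the argument of Lemma~\ref{L:Xblambda} with the map $\chi$ replaced by $\varphi\colon Z_{\bmu}\to\P(\C_{m^2}[t])$, $(u_1,\dotsc,u_r)\mapsto u_1^{|\mu^1|}\dotsb u_r^{|\mu^r|}$, which is exactly the map you introduce (under the name $\chi$). Your closure argument for the boundary $Z_{\bmu}\setminus U_{\bmu}$ is a correct way to see that $X_{\bmu}\subset X_*$, and hence a union of components of the finite scheme $X_*$.
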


%
 This may be proved in nearly the same manner as the proof of
 the finiteness of $X_{\blambda}\to(\P^1)^n$.
 Consequently, we only indicate the differences.
 The main difference is that the map $\chi$ is replaced
 by the map $\varphi\colon Z_{\bmu}\to\P(\C_{n^2}[t])$ defined by
\[
    Z_{\bmu}\ \ni\ (u_1,\dotsc,u_r)\ \longmapsto\
     u_1^{|\mu^1|}\cdot u_2^{|\mu^2|}\dotsb u_r^{|\mu^r|}\ \in\ \P(\C_{m^2}[t])\,.
\]

When $\bmu=\bmu(\blambda)$ so that $\bmu$ and $\blambda$ are the same Schubert problem, the
map $\chi\colon(\P^1)^n\to\P(\C_{m^2}[t])$ factors through $\varphi$.
Define the map $\psi\colon(\P^1)^n\to Z_{\bmu}$ by
 \begin{equation}\label{Eq:psi}
   \defcolor{\psi}\ \colon\ (t_1,\dotsc,t_n)\ \longmapsto\
    \Bigl( \prod_{\{j\in[n] \mid \lambda^j=\mu^i\}} (t-t_j) \mid i=1,\dotsc,r\Bigr)\,.
 \end{equation}
Then $\chi=\varphi\circ\psi$, and we have the commutative diagram
\[
  \begin{picture}(191,53)(0,-1)

   \put(5,42){$X_{\blambda}$}   \put(74,42){$X_{\bmu}$} \put(143,42){$\Gr(m,V)$}
    \put( 30,45){\vector(1,0){40}}
    \put( 99,45){\vector(1,0){40}}

   \put( 11,36){\vector(0,-1){23}}\put(0,23){$f$}
   \put( 82,36){\vector(0,-1){23}}\put(71,23){$f$}
   \put(164,36){\vector(0,-1){23}}\put(166,23){$\Wr$}

   \put(0,0){$(\P^1)^n$}   \put(74,0){$Z_{\bmu}$} \put(143,0){$\P(\C_{m^2}[t])$}
    \put( 30,3){\vector(1,0){40}} \put( 47,7){\footnotesize$\psi$}
    \put( 99,3){\vector(1,0){40}} \put(116,7){\footnotesize$\phi$}
  \end{picture}
\]

The Lagrangian involution $\angle$ on $\Gr(m,V)$ induces an involution $\angle$ on
$\Gr(m,V)\times Z_{\bmu}$ which acts trivially on the second factor.
When $\bmu$ is symmetric,
this restricts to an involution $\angle$ on $X_{\bmu}$ with $f(H^\angle)=f(H)$,
by Corollary~\ref{Co:symmetric}.
In this context, the Theorem of Mukhin, Tarasov, and Varchenko implies that if
$\bu=(u_1,\dotsc,u_r)$ is a point of $U_{\bmu}$ in which the polynomials $u_i$ have distinct
real roots, then all points in the fiber $f^{-1}(\bu)$ consists of $d(\blambda)$ real points.
Thus we have the following corollary of
Lemma~\ref{L:simple} for $f\colon X_{\bmu}\to Z_{\bmu}$.

\begin{theorem}\label{Th:stuck}
 Suppose that $\bmu$ is a symmetric Schubert problem.
 If $f((X_{\bmu})_\angle)$ has codimension at least $2$ in $Z_{\bmu}$, then the number of
 real points in a fiber of $f$ over $u\in U_{\bmu}(\R)$ is congruent to $d(\blambda)$
 modulo four, where each point is counted with its algebraic multiplicity. 
\end{theorem}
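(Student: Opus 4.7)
The plan is to read Theorem~\ref{Th:stuck} off Lemma~\ref{L:simple} applied to the map $f \colon X_{\bmu} \to Z_{\bmu}$, with the Mukhin--Tarasov--Varchenko theorem supplying a base fiber of known real count.

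First I would verify the hypotheses of Lemma~\ref{L:simple}. By Lemma~\ref{L:finite} the map $f$ is finite, hence proper, and it is dominant whenever $d(\blambda) > 0$ (otherwise the conclusion is trivial). The target $Z_{\bmu} \simeq \prod_i \P^{n_i}$ is smooth. The Lagrangian involution $\angle$ on $\Gr(m,V)$, extended to act trivially on the second factor of $\Gr(m,V) \times Z_{\bmu}$, restricts to an involution of $X_{\bmu}$: by Corollary~\ref{Co:symmetric}, since each $\mu^i$ is symmetric and each $\Fdot(s)$ is isotropic, every Schubert variety $X_{\mu^i}\Fdot(s)$ appearing in~\eqref{Eq:fibersofbmu} is $\angle$-stable. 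The relation $f(H^\angle) = f(H)$ is immediate since $\angle$ acts trivially on the second factor, and the codimension hypothesis on $f((X_{\bmu})_\angle)$ in $Z_{\bmu}$ is the standing assumption of the theorem.

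Next I would anchor the congruence at a well-understood fiber and propagate. The real locus $Z_{\bmu}(\R) \simeq \prod_i \R\P^{n_i}$ is a product of real projective spaces, hence connected, and $f((X_{\bmu})_\angle)(\R)$ has real codimension at least two there, so its complement is path-connected. Fix a base point $\bu^\ast \in U_{\bmu}(\R) \setminus f((X_{\bmu})_\angle)(\R)$ whose defining polynomials split into distinct real roots; by Mukhin--Tarasov--Varchenko the fiber $f^{-1}(\bu^\ast)$ is transverse and consists of $d(\blambda)$ real simple points, none of them Lagrangian by the choice of $\bu^\ast$. For any $u \in U_{\bmu}(\R) \setminus f((X_{\bmu})_\angle)(\R)$, a path in $Z_{\bmu}(\R)$ from $u$ to $\bu^\ast$ avoiding $f((X_{\bmu})_\angle)(\R)$, combined with Lemma~\ref{L:simple}, yields $\# f^{-1}(u) \cap X_{\bmu}(\R) \equiv d(\blambda) \pmod 4$.

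The main obstacle I anticipate is the edge case $u \in U_{\bmu}(\R) \cap f((X_{\bmu})_\angle)(\R)$, where the fiber contains Lagrangian points and the hypothesis of Lemma~\ref{L:simple} fails at $u$ itself. I would handle this by a local perturbation argument: move $u$ slightly to a nearby $u' \in U_{\bmu}(\R) \setminus f((X_{\bmu})_\angle)(\R)$ and show that the real counts with algebraic multiplicity at $u$ and at $u'$ differ by a multiple of four. The required analysis mirrors the branch-counting at the end of the proof of Lemma~\ref{L:simple}: complex arcs that merge at a real point of the fiber over $u$ come in pairs, and non-Lagrangian points are paired off by $\angle$ in multiples of four; the careful point is checking that an isolated real Lagrangian point in $f^{-1}(u)$ absorbs the merging arcs in even-sized clusters as well.
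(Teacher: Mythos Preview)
Your first two paragraphs are correct and match the paper's approach exactly: the paper presents Theorem~\ref{Th:stuck} as an immediate corollary of Lemma~\ref{L:simple}, having already established finiteness of $f$ (Lemma~\ref{L:finite}), the $\angle$-stability of fibers (Corollary~\ref{Co:symmetric}), and invoked Mukhin--Tarasov--Varchenko to produce a base fiber with exactly $d(\blambda)$ real points. Your observation that $Z_{\bmu}(\R)\simeq\prod_i\R\P^{n_i}$ is connected is the one detail the paper leaves implicit.

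Your third paragraph goes beyond what the paper does: the paper does not treat the edge case $u\in U_{\bmu}(\R)\cap f((X_{\bmu})_\angle)(\R)$ separately, and simply states the result as a corollary of Lemma~\ref{L:simple}. Your instinct that this case requires care is reasonable, but your proposed resolution does not go through as written. At a real Lagrangian point $x=x^\angle$ in the fiber, the key step in Lemma~\ref{L:simple}'s proof---pairing the complex arcs merging at $x$ with those merging at $x^\angle$ to get a multiple of four---collapses, since $x$ and $x^\angle$ coincide. So ``even-sized clusters'' at such a point gives only a mod~$2$ contribution, not mod~$4$. If you want to close this gap rigorously you would need a different local argument (e.g., analyzing the $\Z_2\times\Z_2$-action on a punctured neighborhood and using that $x$ is fixed by the full group), but the paper itself is content to state the theorem under the hypothesis of Lemma~\ref{L:simple} without further comment.
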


Since the map $\psi\colon(\P^1)^n\to Z_{\bmu}$ is finite, the map $X_{\blambda}\to X_{\bmu}$ is
finite and so this condition on codimension  is equivalent to the
image of $(X_{\blambda})_\angle$ in $(\P^1)^n$ having dimension at most $n{-}2$.

Both aspects of Theorem~\ref{Th:stuck}, the condition on codimension and the congruence
modulo four, are quite subtle.
We will give several examples which serve to illustrate this subtlety.
Conjecture~\ref{C:Bertini} below gives a combinatorial condition which should imply
the condition on codimension and hence the congruence modulo four, but we cannot prove it
as we lack a Bertini-type theorem.
We instead offer a weaker condition that we can prove.
Recall that if $\lambda$ is a symmetric partition, then $\ell(\lambda)$ is the number of
boxes in the main diagonal of the Young diagram of $\lambda$.

\begin{theorem}\label{Th:simple_condition}
  Let $\blambda=(\lambda,\mu,\nu^1,\dotsc,\nu^n)$ be a symmetric Schubert problem in which
  either $\lambda\neq\mu$ or else $\lambda=\mu$ and there is some $i$ with
  $\lambda=\mu=\nu^i$.
  Set $\bmu=\bmu(\blambda)$.
  If
 \begin{equation}\label{Eq:simple_condition}
   2\ +\ \frac{1}{2}\left(|\nu^1|+\dotsb+|\nu^n|
     \ +\ m-\ell(\lambda)-\ell(\mu) \right)\ \leq\ n\,,
 \end{equation}
 then the number of real points in a fiber of $X_{\bmu}$ over a real point of $U_{\bmu}$ is
 congruent to $d(\blambda)$ modulo four, where each point is counted with its algebraic
 multiplicity. 
\end{theorem}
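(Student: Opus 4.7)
The plan is to apply Theorem~\ref{Th:stuck}, so I must show that $f\bigl((X_\bmu)_\angle\bigr)$ has codimension at least $2$ in $Z_\bmu$. Pulling back along the finite map $\psi\colon(\P^1)^N\to Z_\bmu$ from~\eqref{Eq:psi}, with $N=n+2$, this is equivalent to showing that
\[
   \tilde B\ :=\ \Bigl\{(t_1,\ldots,t_N)\in(\P^1)^N \ :\
     \textstyle\bigcap_{i=1}^N Y_{\lambda^i}\Fdot(t_i)\neq\emptyset\Bigr\}
\]
has dimension at most $N-2=n$. The hypothesis on $(\lambda,\mu)$ is exactly what permits me to distinguish two coordinates $t_1,t_2$ of $(\P^1)^N$, associated respectively to $\lambda$ and $\mu$, that can be taken generically distinct: if $\lambda\neq\mu$ they lie in separate groups of the product, while if $\lambda=\mu=\nu^i$ for some~$i$ this common partition has multiplicity at least three in $\bmu$, so any two of its $\geq 3$ coordinates will do. I label the remaining coordinates $s_1,\ldots,s_n$.

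The key geometric input is that the osculating flags $\Fdot(t_1)$ and $\Fdot(t_2)$ are in linear general position for $t_1\neq t_2$, so by Kleiman's transversality theorem (as recalled after~\eqref{Eq:Lagr-Int}) the Lagrangian intersection $Y_\lambda\Fdot(t_1)\cap Y_\mu\Fdot(t_2)$ either is empty or has pure dimension $\binom{m+1}{2}-\|\lambda\|-\|\mu\|$. Imposing the remaining $\nu^j$-conditions, I work with the incidence
\[
  \widetilde V\ :=\ \{(H,t_1,t_2,s_1,\ldots,s_n) : t_1\neq t_2,\
    H\in Y_\lambda\Fdot(t_1)\cap Y_\mu\Fdot(t_2),\
    H\in Y_{\nu^j}\Fdot(s_j)\ \forall\,j\}.
\]
The forgetful projection $\widetilde V\to\{(H,t_1,t_2)\}$ has fiber over $(H,t_1,t_2)$ equal to $\prod_j T_{\nu^j}(H)$, where $T_\nu(H):=\{s\in\P^1 : H\in Y_\nu\Fdot(s)\}$. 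For a generic $H\in\LG(V)$ the Wronskian $\Wr(H)$ is a nonzero polynomial of degree $m^2$, and since any $\nu$ with $|\nu|\geq 1$ satisfies $X_\nu\Fdot(s)\subset X_{\Is}\Fdot(s)$, each $T_{\nu^j}(H)$ is contained in the finite zero set of $\Wr(H)$ (this is also the content of Purbhoo's description of Wronski fibers used in Lemma~\ref{L:Xblambda}); the fiber is therefore finite. Consequently,
\[
  \dim\widetilde V\ \leq\ 2+\binom{m+1}{2}-\|\lambda\|-\|\mu\|,
\]
and the image of $\widetilde V$ in $(\P^1)^N$ contains $\tilde B\cap\{t_1\neq t_2\}$ and is bounded by the same quantity. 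A routine manipulation shows that the hypothesis~\eqref{Eq:simple_condition} is equivalent to $\|\lambda\|+\|\mu\|+n\geq\binom{m+1}{2}+2$, whence this bound is at most $n$.

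It remains to control the contribution to $\tilde B$ from the diagonal $\{t_1=t_2\}$, and this is precisely where the two cases of the $(\lambda,\mu)$-hypothesis enter: when $\lambda\neq\mu$, the coordinates $t_1,t_2$ live in distinct factors of $(\P^1)^N$, so the diagonal maps into the codimension-one common-root locus of $Z_\bmu$ and cannot produce a top-dimensional extra component of the image; when $\lambda=\mu$, the multiplicity-$\geq 3$ hypothesis lets me replace $(t_1,t_2)$ by a different pair chosen among the three or more coordinates for that partition, which generically avoids the diagonal entirely, and the same bound applies. Thus $\dim\tilde B\leq n$ as required. The main obstacles I foresee in carrying this out rigorously are (i) the finiteness of $T_{\nu^j}(H)$ for generic $H\in\LG(V)$, which rests on the nonvanishing of the Wronskian on a dense open subset of $\LG(V)$, and (ii) the delicate diagonal analysis, which is exactly what the peculiar case distinction separating $\lambda,\mu$ from the $\nu^j$ is designed to control.
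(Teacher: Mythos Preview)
Your dimension count on the off-diagonal locus $\{t_1\neq t_2\}$ is exactly the paper's argument: bound the fiber of $(\widetilde X_{\blambda})_\angle$ over $(a,b)$ by $\dim\bigl(Y_\lambda\Fdot(a)\cap Y_\mu\Fdot(b)\bigr)=\binom{m+1}{2}-\|\lambda\|-\|\mu\|$, and use finiteness of $\{s:H\in X_{\Is}\Fdot(s)\}$ to see that the remaining projection has finite fibers. The computation~\eqref{Eq:myDim} and your observation that~\eqref{Eq:simple_condition} is equivalent to $\|\lambda\|+\|\mu\|+n\geq\binom{m+1}{2}+2$ match the paper.

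The gap is in your diagonal analysis, and it is exactly the point where your route (apply Theorem~\ref{Th:stuck} directly) diverges from the paper's. When $\lambda\neq\mu$, you say the diagonal $\{t_1=t_2\}$ maps into the codimension-one common-root locus of $Z_{\bmu}$ and ``cannot produce a top-dimensional extra component of the image.'' But codimension one is not enough: you need the image of $(X_{\bmu})_\angle$ to have codimension at least two, and nothing you have written rules out $\tilde B\cap\{t_1=t_2\}$ surjecting onto that codimension-one locus. Indeed, on the diagonal $Y_\lambda\Fdot(t_1)\cap Y_\mu\Fdot(t_1)=Y_\kappa\Fdot(t_1)$ with $\kappa=\lambda\vee\mu$, and there is no reason $\|\kappa\|$ should be large enough to force the intersection with the $Y_{\nu^j}\Fdot(s_j)$ to be empty; the hypothesis~\eqref{Eq:simple_condition} constrains $\|\lambda\|+\|\mu\|$, not $\|\kappa\|$. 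So the bound $\dim\tilde B\leq n$ may simply be false on the diagonal, and Theorem~\ref{Th:stuck} may not apply.

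The paper avoids this by \emph{not} bounding $f((X_{\bmu})_\angle)$ on all of $Z_{\bmu}$. It works over $(\P^1)^2_{\neq}\times(\P^1)^n$, establishes the codimension-two bound only on the image of that region under $\psi$, and then invokes the argument of Lemma~\ref{L:simple} directly rather than through Theorem~\ref{Th:stuck}: one only needs a real path $\gamma$ from $\bu$ to $\bu'$ in $Z_{\bmu}(\R)$ that stays inside this image and avoids the (now codimension-two) bad locus. The hypothesis on $(\lambda,\mu)$ is used \emph{here}, to guarantee such a path exists---when $\lambda\neq\mu$ the roots $a,b$ lie in different polynomials $u_i,u_j$ and can be kept apart along $\gamma$; when $\lambda=\mu=\nu^i$ the relevant polynomial has degree $\geq 3$, so one can keep two of its roots apart. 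Your swapping-pairs argument in the second case is fine and amounts to the same thing, but in the first case you need to abandon the attempt to bound $\tilde B$ globally and instead argue about connecting paths as the paper does.
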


We defer the proof of Theorem~\ref{Th:simple_condition}, first giving a family of
  Schubert problems for which it gives lower bounds, and then another example of a
  Schubert problem to which it applies.

\begin{example}\label{Ex:degreeZero}
 The work of Eremenko and Gabrielov on lower bounds in the Schubert calculus~\cite{EG01}
 was generalized in Theorem 6.4 of~\cite{SS}, which applies to the Wronski map restricted
 to certain intersections of two Schubert varieties.
 In some cases when the topological degree is zero and
 Theorem~\ref{Th:simple_condition} applies
 the congruence modulo four implies that every fiber has at least two real
 points as in Corollary~\ref{cor33}.
 We explain this.

 Let $\lambda$ and $\mu$ be partitions encoding Schubert conditions on $\Gr(m,V)$.
 These define a \demph{skew partition}, \defcolor{$\lambda^c/\mu$}.
 Here, $\lambda^c$ is the partition complementary to $\lambda$
 in that $\lambda_i^c=m-\lambda_{m+1-i}$ for $i=1,\dotsc,m$ and $\lambda^c/\mu$ is
 the collection of boxes that remain in $\lambda^c$ after removing $\mu$.
 Implicit in this construction is that $\mu\subset\lambda^c$.
 Below are two examples, one when $m=4$ with $\lambda=\TI$ and $\mu=\ThII$,
 and the other when $m=5$ with $\lambda=\ThTI$ and $\mu=\ThThT$.
 The skew partition is unshaded.
 \begin{equation}\label{Eq:skew}
   \raisebox{-10.5pt}{\includegraphics{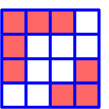}}
   \qquad\qquad
   \raisebox{-14pt}{\includegraphics{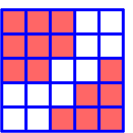}}
 \end{equation}

 Let $\defcolor{|\lambda^c/\mu|}:=m^2-|\lambda|-|\mu|$ be the number of boxes in the skew
 partition $\lambda^c/\mu$.
 Restricting the Wronski map to the intersection
 $\defcolor{\Omega_{\lambda,\mu}}:=\Omega_\lambda\Fdot(\infty)\cap\Omega_\mu\Fdot(0)$
 of Schubert varieties (and dividing by $t^{|\mu|}$) gives a finite map
 \begin{equation}\label{Eq:Wr_restricted}
   \Wr_{\lambda,\mu}\ \colon\  \Omega_{\lambda,\mu}\ \longrightarrow\
    \P(\C_{|\lambda^c/\mu|}[t])\,.
 \end{equation}
 Its degree is the number \defcolor{$f(\lambda^c/\mu)$} of
 standard Young tableaux of skew shape $\lambda^c/\mu$~\cite{Fu97}.
 The boxes in $\lambda^c/\mu$ form a poset in which a box is
 covered by it neighbors immediately to its right or below and $f(\lambda^c/\mu)$ is the
 number of linear extensions of this poset~\cite{St86}.

 The variety $\Omega_{\lambda,\mu}$ in the Pl\"ucker embedding of the Grassmannian
 $\Gr(m,V)$ is a variety whose coordinate ring is an algebra with straightening law on the
 poset $\lambda^c/\mu$, which is a distributive lattice.
 In these Pl\"ucker coordinates the map $\Wr_{\lambda,\mu}$ is what is called
 in~\cite[\S 6]{SS} a \demph{Wronski projection for $\lambda^c/\mu$ with constant sign}.
 Furthermore, the subset of $\Omega_{\lambda,\mu}$ where the minimally indexed Pl\"ucker
 coordinate $x_\mu$ does not vanish is an open subset of affine space and is therefore
 orientable.
 By Theorem~6.4 of~\cite{SS} the degree (called there the characteristic) of the Wronski map
 is the \demph{sign-imbalance of the poset $\lambda^c/\mu$}.

 This sign-imbalance is
 defined as follows.
 Fixing one linear extension of $\lambda^c/\mu$, all others are obtained from it by a
 permutation.
 The sign-imbalance of $\lambda^c/\mu$ is the absolute value of the sum of the signs of
 these permutations.

 When $\lambda$ and $\mu$ are symmetric, transposition induces an involution $T\mapsto
 T^\angle$ on  the set of Young tableaux/linear extensions.
 The permutations corresponding to $T$ and to $T^\angle$ differ by the product of
 transpositions, one for each pair of boxes in $\lambda^c/\mu$ that are interchanged by
 transposing.
 Thus when there is an odd number of such pairs, $T$ and $T^\angle$ contribute opposite signs
 to the sign-imbalance.
 Since $T\neq T^\angle$ when $|\lambda^c/\mu|>1$, this argument also shows that
 $f(\lambda^c/\mu)$ is even.
 We deduce the following lemma.

\begin{lemma}
   When a symmetric skew partition $\lambda^c/\mu$ has an odd number of boxes above its main
   diagonal, its sign-imbalance is zero.
\end{lemma}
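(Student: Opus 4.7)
The plan is to observe that this lemma is essentially immediate from the sign computation carried out in the paragraph preceding its statement; the task is to assemble that computation into a proof.

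First, I would verify that the transposition involution $T \mapsto T^\angle$ is well-defined on the set of standard Young tableaux of shape $\lambda^c/\mu$. Since $\lambda$ and $\mu$ are symmetric, the standard fact $(\lambda^c)_i = m - \lambda_{m+1-i}$ shows $\lambda^c$ is symmetric too, so $\lambda^c/\mu$ is a symmetric subset of the $m\times m$ box. Transposition is therefore an involution on the set of boxes of $\lambda^c/\mu$, and hence on the set of standard fillings.

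Second, I would invoke the sign calculation already made in the paragraph above the lemma: the linear extension of the poset $\lambda^c/\mu$ associated with $T^\angle$ differs from the one associated with $T$ by the permutation which swaps the value at $(i,j)$ with the value at $(j,i)$, for each off-diagonal pair of boxes in $\lambda^c/\mu$. This permutation is a product of transpositions, one for each pair $\{(i,j),(j,i)\}$ with $i<j$, i.e.\ one for each box strictly above the main diagonal. Its sign is therefore $(-1)^k$, where $k$ is the number of boxes of $\lambda^c/\mu$ strictly above the diagonal.

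Third, under the hypothesis that $k$ is odd, the tableaux $T$ and $T^\angle$ contribute opposite signs to the sign-imbalance. Moreover, $k\geq 1$ forces the existence of at least one off-diagonal box, and since a standard Young tableau takes distinct values, it cannot coincide with its transpose once an off-diagonal box is present; therefore $T \neq T^\angle$ for every $T$. The involution thus partitions the set of standard Young tableaux into pairs whose signed contributions cancel, so the sign-imbalance, being the absolute value of a sum of zeroes, vanishes. No serious obstacle arises: essentially all the content is contained in the preceding paragraph, and the only point requiring care is the observation that fixed points of the involution are excluded by the hypothesis $k\geq 1$.
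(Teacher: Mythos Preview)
Your proposal is correct and follows essentially the same approach as the paper: the lemma is deduced directly from the sign computation in the preceding paragraph, using the transposition involution $T\mapsto T^\angle$ to pair tableaux with opposite signs. Your justification that $T\neq T^\angle$ because $k\geq 1$ forces an off-diagonal box with distinct entries is in fact slightly cleaner than the paper's version, which appeals to $|\lambda^c/\mu|>1$.
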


 The skew partitions in~\eqref{Eq:skew} have three and five boxes above their main
 diagonal, respectively, and so both have sign-imbalance zero.
 For symmetric partitions $\lambda$ and
 $\mu$, we have a symmetric Schubert problem
 $\bmu:=\{(\lambda,1),(\mu,1),(\I,|\lambda^c/\mu|)\}$.
 Then $Z_{\bmu}$ is the product $ \P^1\times \P^1\times \P(\C_{|\lambda^c/\mu|}[t])$ and
 the restricted Wronski map~\eqref{Eq:Wr_restricted} is simply the restriction of
 the map $X_{\bmu}\to Z_{\bmu}$ to the set $\{(0,\infty)\}\times\P(\C_{|\lambda^c/\mu|}[t])$,
 and the whole family  $X_{\bmu}\to Z_{\bmu}$ is the closure of the
 $\PGL$-orbit of~\eqref{Eq:Wr_restricted}.
 Thus we lose nothing by considering the restricted Wronski
 map~\eqref{Eq:Wr_restricted} in place of $X_{\bmu}\to Z_{\bmu}$.

\begin{corollary}\label{Cor:degzero}
  Let $\lambda$ and $\mu$ be symmetric partitions.
  If $4+m\leq|\lambda^c/\mu|+\ell(\lambda)+\ell(\mu)$, $\lambda^c/\mu$ has an odd number
  of boxes above its main diagonal, and $f(\lambda^c/\mu)$ is congruent to two
  modulo four, then the degree of the real restricted Wronski map~\eqref{Eq:Wr_restricted}
  is zero, but its fiber over every real polynomial contains either two simple
  real points or a multiple point.
\end{corollary}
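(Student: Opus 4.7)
The plan is to combine the sign-imbalance identification recalled in Example~\ref{Ex:degreeZero} with Theorem~\ref{Th:simple_condition}, addressing the two assertions of the corollary separately.

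For the claim that the topological degree of $\Wr_{\lambda,\mu}$ is zero, I would invoke Theorem~6.4 of~\cite{SS}, as recalled in the discussion preceding the corollary, which identifies this degree with the sign-imbalance of the poset~$\lambda^c/\mu$. The Lemma just before the corollary then forces the sign-imbalance to vanish under hypothesis~(2).

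For the mod-$4$ bound I would apply Theorem~\ref{Th:simple_condition} to the symmetric Schubert problem
$\bmu=\{(\lambda,1),(\mu,1),(\I,\,|\lambda^c/\mu|)\}$,
whose family $X_{\bmu}\to Z_{\bmu}$ is the $\PGL$-orbit closure of $\Wr_{\lambda,\mu}$ as remarked just before the corollary statement, and for which $d(\blambda)=f(\lambda^c/\mu)$. Writing $n=|\lambda^c/\mu|$ and taking $\nu^1=\cdots=\nu^n=\I$, so that $|\nu^1|+\cdots+|\nu^n|=n$, the inequality~\eqref{Eq:simple_condition} becomes
\[
   2+\tfrac{1}{2}\bigl(n+m-\ell(\lambda)-\ell(\mu)\bigr)\ \leq\ n,
\]
which rearranges to $4+m\leq|\lambda^c/\mu|+\ell(\lambda)+\ell(\mu)$, exactly hypothesis~(1). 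The side condition of Theorem~\ref{Th:simple_condition} (that either $\lambda\neq\mu$, or $\lambda=\mu=\nu^i$ for some~$i$) is automatic in the generic case $\lambda\neq\mu$; in the case $\lambda=\mu$ it reduces to $\lambda=\I$, which is covered because $n\geq 1$ forces $\I$ to appear among the $\nu^i$.

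Invoking Theorem~\ref{Th:simple_condition} then yields that the number of real points in every real fiber of $\Wr_{\lambda,\mu}$, counted with multiplicity, is congruent to $f(\lambda^c/\mu)\equiv 2\pmod 4$ by hypothesis~(3). As a nonnegative integer this count is therefore at least~$2$, which forces every real fiber to contain either two distinct simple real points or at least one real point of multiplicity~$\geq 2$. The substantive work is carried out by the previously established Theorem~\ref{Th:simple_condition} and the sign-imbalance lemma; the only nontrivial step is the algebraic manipulation matching~\eqref{Eq:simple_condition} to hypothesis~(1) along with the side-condition bookkeeping for $\lambda=\mu$, and I do not anticipate a genuine obstacle.
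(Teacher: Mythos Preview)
Your proposal is correct and follows exactly the route the paper takes: the paragraph immediately after the corollary states that $4+m\leq|\lambda^c/\mu|+\ell(\lambda)+\ell(\mu)$ is condition~\eqref{Eq:simple_condition} of Theorem~\ref{Th:simple_condition} for the problem $\{(\lambda,1),(\mu,1),(\I,|\lambda^c/\mu|)\}$ with $n=|\lambda^c/\mu|$, and the degree-zero assertion comes from the sign-imbalance lemma just above. One small caveat: your side-condition bookkeeping does not actually dispose of the case $\lambda=\mu\neq\I$ (where Theorem~\ref{Th:simple_condition} as stated does not directly apply), but the paper does not address this either, and its $\bmu$-notation $\{(\lambda,1),(\mu,1),\ldots\}$ tacitly presumes the three partitions are distinct.
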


 The condition $4+m\leq|\lambda^c/\mu|+\ell(\lambda)+\ell(\mu)$ is the condition of
 Theorem~\ref{Th:simple_condition} for the Schubert problem
 $\{(\lambda,1),(\mu,1),(\I,|\lambda^c/\mu|)\}$ as $n=|\lambda^c/\mu|$.
 Thus Corollary~\ref{Cor:degzero} gives a class of geometric problems in which the lower
 bound given by the degree is not sharp.
 It generalizes Corollary~\ref{cor33} as the problem $\{(\I,9)\}$ on $\Gr(3,\C^6)$ with 42
 solutions satisfies the hypotheses of Corollary~\ref{Cor:degzero}.
 There are two such Schubert problems on $\Gr(4,\C^8)$,
\[
   \{(\ThThT,1),(\I,8)\}\mbox{ with 90 solutions}
   \quad\mbox{and}\quad
   \{(\ThII,1),(\TT,1),(\I,8)\}\mbox{ with 426 solutions.}
\]
 We give the numbers of problems satisfying the hypotheses of Corollary~\ref{Cor:degzero} 
 for small values of $m$.
\[
  \begin{tabular}{|c||c|c|c|c|c|}\hline
   $m$ &3&4&5&6&7\\\hline
   number &1&2&7&18&34\\\hline
  \end{tabular}
\]
The problem for the second skew tableau of~\eqref{Eq:skew} has $40,370$ solutions, and the
largest on $\Gr(7,\C^{14})$ has $\lambda=(5,4,2,2,1)$, $\mu=(6,5,2,2,2,1)$, and
$843,201,530$ solutions.
%
\end{example}

\begin{example}
 Consider the Schubert problem
 $\blambda=(\,\ThTI, \ThII,  \I,\I,\I,\I,\I)$
 on $\Gr(4,\C^8)$ with 40 solutions.
 Here $n=5$, $\lambda=\ThTI$,
 $\mu=\ThII$, and
 $\nu^1,\dotsc,\nu^5=\I$.
 Then~\eqref{Eq:simple_condition} is
 \[
    5\ \geq\ 2 + \frac{1}{2}(1+1+1+1+1\:+\: 4-2-1)
     \ =\ 5\,.
 \]
 Theorem~\ref{Th:simple_condition} implies that this Schubert problem exhibits a
 congruence modulo four.
 We have observed this in experimentation.
 Table~\ref{T:40} displays the result of a computation which took $30.6$ gigaHertz-days.
 \begin{table}[htb]
 %
 %
 \caption[Schubert problem with 40 solutions]{Schubert problem
   $\left\{
    \bigl(\,\I,5\bigr)\,,\,
    \bigl(\,\ThII,1\bigr)\,,\,
    \bigl(\,\ThTI,1\bigr)
   \right\}$ with 40 solutions.}\label{T:40}

  \begin{tabular}{|c||c|c|c|c|c|c|c|c|c|c|c|}\hline
   Num. real&0&2&4&6&8&10&12&14&16&18&20\\\hline\hline
   Frequency&181431&0&62673&0&48005&0&29422&0&8360&0&11506\\\hline
  \end{tabular}\medskip

  \begin{tabular}{|c||c|c|c|c|c|c|c|c|c|c||c|}\hline
   Num. real&22&24&26&28&30&32&34&36&38&40&Total\\\hline\hline
   Frequency&0&14137&0&6123&0&2040&0&9696&0&226607&600000\\\hline
  \end{tabular}

 \end{table}
 The columns are labeled by the possible numbers of real solutions, and each cell records how
 many computed instances had that number of real solutions.
\end{example}

\begin{proof}[Proof of Theorem~$\ref{Th:simple_condition}$.]
 Let $\blambda=(\lambda,\mu,\nu^1,\dotsc,\nu^n)$ be a symmetric Schubert problem for
 $\Gr(m,V)$.
 Recall that $f\colon X^\circ_{\blambda}\to(\P^1)^{2+n}_{\neq}$ is the family whose fiber over
 $(a,b,t_1,\dotsc,t_n)\in(\P^1)^{2+n}_{\neq}$ is
\[
    X_\lambda\Fdot(a)\,\cap\,
    X_\mu\Fdot(b)\,\cap\,
    \bigcap_{i=1}^n  X_{\nu^i}\Fdot(t_i)\,.
\]
 Consider its closure $\widetilde{X}_{\blambda}$ in
 $\Gr(m,V)\times(\P^1)^2_{\neq}\times(\P^1)^n$---the points $a$ and $b$ defining the first two
 Schubert varieties remain distinct, but any other pair may collide.

 The fiber of $\widetilde{X}_{\blambda}$ over a point
 $\bt:=(a,b,t_1,\dotsc,t_n)\in(\P^1)^{2+n}_{\neq}$ is a subset of
 \[
        X_\lambda\Fdot(a)\,\cap\, X_\mu\Fdot(b)\,.
 \]
 The fiber of $(\widetilde{X}_{\blambda})_\angle$ over the same point $\bt$ is a subset of
 $(X_\lambda\Fdot(a)\,\cap\, X_\mu\Fdot(b))_\angle$, which is
 \begin{equation}\label{Eq:Lagr_int}
       (X_\lambda\Fdot(a))_\angle \,\cap\, (X_\mu\Fdot(b))_\angle
     \ =\
        Y_\lambda\Fdot(a)\,\cap\, Y_\mu\Fdot(b)\,.
 \end{equation}
 As $a\neq b$, the flags $\Fdot(a)$ and $\Fdot(b)$ are in linear general position and so this
 intersection is generically transverse in $\LG(V)$.
 Thus~\eqref{Eq:Lagr_int} has dimension
 \begin{eqnarray}
   \dim \LG(V)-\|\lambda\|-\|\mu\| &=&\nonumber
   \frac{1}{2}(m^2+m)\ -\ \frac{1}{2}(|\lambda|+\ell(\lambda))
    \ -\ \frac{1}{2}(|\mu|+\ell(\mu))\\\nonumber
  &=& \frac{1}{2}(m^2+m-|\lambda|-|\mu|-\ell(\lambda)-\ell(\mu))\\
  \label{Eq:myDim}
  &=& \frac{1}{2}(|\nu^1|+\dotsb+|\nu^n|\ +\ m-\ell(\lambda)-\ell(\mu))\,,
\end{eqnarray}
 because $|\lambda|+|\mu|+|\nu^1|+\dotsb+|\nu^n|=m^2$ as $\blambda$ is a Schubert problem on
 $\Gr(m,V)$.

 Let $\defcolor{\calY_{\lambda,\mu}}\subset\LG(V)\times(\P^1)^2_{\neq}$ be
 the family of intersections~\eqref{Eq:Lagr_int}
\[
   \{(H,a,b)\: \mid\: H\in Y_\lambda\Fdot(a)\cap Y_\mu\Fdot(b)\}\,,
\]
 which has dimension
\[
   \dim \calY_{\lambda,\mu}\ =\
    2\ +\ \frac{1}{2}(|\nu^1|+\dotsb+|\nu^n|\ +\ m-\ell(\lambda)-\ell(\mu))\,.
\]
 Forgetting $(t_1,\dotsc,t_n)$ gives a map
 $\pi\colon(\widetilde{X}_{\blambda})_\angle \to \calY_{\lambda,\mu}$.
 Its fiber over a point $(H,a,b)$ is
\[
  \{(t_1,\dotsc,t_n)\:\mid\: H\in X_{\nu^i}\Fdot(t_i)\mbox{ for }i=1,\dotsc,n\}\,.
\]
 Since $H\in X_{\nu}\Fdot(t)$ implies that
 $H\in X_{\Is}\Fdot(t)$, and this second condition
 occurs for only finitely many $t\in\P^1$ (these are the zeroes of the Wronskian of $H$),
 the fiber $\pi^{-1}(H,a,b)$ is either empty or it is
a finite set.
 Thus
 \begin{equation}\label{Eq:dimension_estimate}
   \dim (\widetilde{X}_{\blambda})_\angle \ \leq\
     2\ +\ \frac{1}{2}(|\nu^1|+\dotsb+|\nu^n|\ +\ m-\ell(\lambda)-\ell(\mu))\,,
 \end{equation}
 and so
 the condition~\eqref{Eq:simple_condition} implies that
\[
  \dim  f((\widetilde{X}_{\blambda})_\angle)\ \leq\
  \dim (\widetilde{X}_{\blambda})_\angle\ \leq\
  n\ =\ \dim ((\P^1)^2_{\neq}\times(\P^1)^n)-2\,.
\]

 To complete the proof, let $\bmu:=\bmu(\blambda)$ and consider the map $\psi$ defined
 in~\eqref{Eq:psi}
\[
  \psi\ \colon\ (\P^1)^2_{\neq}\times(\P^1)^n\ \longrightarrow\ Z_{\bmu}\,.
\]
 To apply the argument in the proof of Lemma~\ref{L:simple} to the family
 $f\colon X_{\bmu}\to Z_{\bmu}$ requires that there exists a curve
 $\gamma\subset Z_{\bmu}(\R)$ connecting any two points $\bu,\bu'\in U_{\bmu}(\R)$
 such that $f^{-1}(\gamma)\cap (X_{\bmu})_\angle=\emptyset$.
 This is always possible if the codimension of $f((X_{\bmu})_\angle)$ is at least 2,
 $Z_{\bmu}$ is smooth, and $Z_{\bmu}(\R)$ is connected.

 From the first part of this proof, we have control over the points of $(X_{\bmu})_\angle$
 lying over the image of $\psi$.
 Thus we seek a curve $\gamma$ lying in the real points of the image of $\psi$.
 This image consists of polynomials $(u_1,\dotsc,u_r)\in Z_{\bmu}$ where the root $a$
 corresponding to $\lambda$ is distinct from the root $b$ corresponding to $\mu$.
 The curve $\gamma$ connecting $\bu,\bu'\in U_{\bmu}(\R)$ must be a curve of real
 polynomials where these roots remain distinct.
 This can always be done when either $\lambda\neq\mu$ (so that $a$ and $b$ are roots of
 different polynomials), or else $\lambda=\mu$ and there is some $i$ with $\mu=\nu^i$.
 In this second case, we require that the polynomial $u_j$ corresponding to this common
 partition has at least two distinct roots at every point of $\gamma$---which is possible,
 as $\deg u_j\geq 3$.
\end{proof}

\begin{example}\label{Ex:12}
 The condition of Theorem~\ref{Th:simple_condition} is sufficient, but by no means necessary
 for there to be a congruence modulo four.
 In particular, the estimate~\eqref{Eq:dimension_estimate} on the dimension of
 $(\widetilde{X}_{\blambda})_\angle$ could be improved.
 Consider the problem
 $\blambda=(\,\ThII, \TT,\TT,  \I,\I,\I)$
 on $\Gr(4,\C^8)$ with 12 solutions.
 For Condition~\eqref{Eq:simple_condition}, we have
 $n=4$,
 $\lambda=\ThII$,
 $\mu=\TT$, and
 $\nu^1,\dotsc,\nu^4$ equal to
 $\TT, \I,\I,\I$.
 Then~\eqref{Eq:simple_condition} becomes
\[
  4 \geq 2 + \frac{1}{2}(4+1+1+1\ +\ 4-1-2)\ =\ 6\,,
\]
 which does not hold.
 Nevertheless, we observed a congruence modulo four in this Schubert problem.
 Table~\ref{T:12} displays the results of a computation that consumed 150.8
 gigaHertz-days of computing.
 %
 %
 \begin{table}[htb]
  \caption[Schubert problem with 12 solutions]{Schubert problem $\left\{
    \bigl(\,\I,3\bigr)\,,\,
    \bigl(\,\TT,2\bigr)\,,\,
    \bigl(\,\ThII,1\bigr)
   \right\}$ with 12 solutions.}\label{T:12}
  \begin{tabular}{|c||c|c|c|c|c|c|c||c|}\hline
   Num. real&0&2&4&6&8&10&12&Total\\\hline\hline
   Frequency&214375&0&231018&0&61600&0&293007&800000\\\hline
  \end{tabular}
 \end{table}
 While we did not explicitly compute $\dim f((\widetilde{X}_{\blambda})_\angle)$, it is
 at most 3, and we expect it to be $2$, based on heuristic arguments that we give
 below. 
\end{example}

 Suppose that $\blambda$ is symmetric and $\bt=(t_1,\dotsc,t_n)\in(\P^1)^n_{\neq}$.
 Then the fiber of $X_{\blambda}$ over $\bt$ is the intersection~\eqref{Eq:total_family}.
 By Corollary~\ref{Co:symmetric}, the points of $(X_{\blambda})_\angle$ lying over $\bt$
 are
\[
   Y_{\lambda^1}\Fdot(t_1)\,\cap\,
   Y_{\lambda^2}\Fdot(t_2)\,\cap\, \dotsb\, \cap\,
   Y_{\lambda^n}\Fdot(t_n)\,,
\]
 which is a subscheme of $\LG(V)$.
 Since the codimension of $Y_{\lambda}\Fdot(t)$ in $\LG(V)$ is $\|\lambda\|$,
 it is reasonable to conjecture that the expected dimension of such an intersection
 gives the dimension of the image of $(X_{\blambda})_\angle$ in $(\P^1)^n$, which would then
 imply a congruence modulo four.
 We make a conjecture based on these observations.

\begin{conjecture}\label{C:Bertini}
  Suppose that $\blambda=(\lambda^1,\dotsc,\lambda^n)$ is a symmetric Schubert problem for
  $\Gr(m,V)$.
  If we have
 \begin{equation}\label{Eq:codimension}
    2\ \leq\ \|\lambda^1\|\ +\ \dotsb\ +\ \|\lambda^n\|
    \ -\ \dim\LG(V)\,,
 \end{equation}
  then  the number of real points in a fiber of $X_{\bmu}$ over a real point of $U_{\bmu}$ is
  congruent to $d(\blambda)$ modulo four, where $\bmu=\bmu(\blambda)$.
\end{conjecture}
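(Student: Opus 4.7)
The plan is to invoke Theorem~\ref{Th:stuck}, which reduces Conjecture~\ref{C:Bertini} to showing that $f((X_{\bmu})_\angle)$ has codimension at least two in $Z_{\bmu}$. As in the proof of Theorem~\ref{Th:simple_condition}, since the map $\psi\colon(\P^1)^n\to Z_{\bmu}$ of~\eqref{Eq:psi} is finite, the induced map $X_{\blambda}\to X_{\bmu}$ is finite, so this codimension requirement is equivalent to asking that the image of $(X_{\blambda})_\angle$ in $(\P^1)^n$ have dimension at most $n-2$. By Corollary~\ref{Co:symmetric}, over a point $\bt=(t_1,\dotsc,t_n)\in(\P^1)^n_{\neq}$ the fiber of $(X_{\blambda})_\angle$ is the Lagrangian intersection $Y_{\lambda^1}\Fdot(t_1)\cap\dotsb\cap Y_{\lambda^n}\Fdot(t_n)$, so the task is to bound the parameter locus over which this intersection is non-empty.

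I would then consider the incidence variety
\[
  I\ :=\ \{(H,t_1,\dotsc,t_n)\in\LG(V)\times(\P^1)^n_{\neq}\mid
       H\in Y_{\lambda^i}\Fdot(t_i),\ i=1,\dotsc,n\}\,,
\]
whose open part coincides with that of $(X_{\blambda})_\angle$. Each slice $Y_{\lambda^i}\Fdot(t_i)$ has codimension $\|\lambda^i\|$ in $\LG(V)$, so the expected dimension of $I$ is
\[
  \dim\LG(V)+n-\sum_{i=1}^n\|\lambda^i\|\,,
\]
which by hypothesis~\eqref{Eq:codimension} is at most $n-2$. If the actual dimension of $I$ does not exceed the expected one, then its image in $(\P^1)^n$ has dimension at most $n-2$, giving the required codimension bound. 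The congruence would then follow by the arc-counting argument of Lemma~\ref{L:simple}: since $Z_{\bmu}$ is a product of projective spaces, it is smooth with path-connected real locus, so any two points of $U_{\bmu}(\R)$ could be joined by a real path avoiding the codimension-two real image $f((X_{\bmu})_\angle)(\R)$.

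The principal obstacle I anticipate is establishing the dimension bound on $I$. This amounts to a Bertini-type transversality statement for intersections of Schubert varieties in the Lagrangian Grassmannian cut by the one-parameter family of osculating flags $\Fdot(t)$. Kleiman's transversality theorem~\cite{Kl74} does not apply, since these flags are not generic translates under $\Sp(V)$; they form a single $\PGL$-orbit. A natural route would be to adapt the ideas used in the Grassmannian case: Eisenbud-Harris~\cite{EH} obtained finiteness of Wronski fibers by degeneration, and Mukhin-Tarasov-Varchenko~\cite{mtv1,mtv2} proved real transversality via the Bethe ansatz. One might hope for a symplectic analogue of either approach, perhaps via an integrable-system interpretation of Lagrangian Schubert intersections, or by degenerating osculating flags as parameters $t_i$ collide (in the spirit of the compactification of~\cite{Speyer}) until one reaches a configuration where a suitable transversality result applies. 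Until such a theorem is available, the dimension estimate on $I$ -- and hence the conjecture -- remains beyond reach, which is precisely why our weaker Theorem~\ref{Th:simple_condition} restricts attention to combinatorial situations where the required estimate can be read off directly from the exponents of the single free pair of Schubert conditions.
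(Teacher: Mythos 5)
The statement you are asked to prove is labeled a \emph{conjecture} in the paper, and the authors explicitly state that they cannot prove it ``as we lack a Bertini-type theorem.'' Your proposal correctly recognizes this: you reduce the conjecture, via Theorem~\ref{Th:stuck} and the finiteness of $\psi$, to the assertion that the image of $(X_{\blambda})_\angle$ in $(\P^1)^n$ has dimension at most $n-2$, and you identify the obstruction as a transversality/proper-intersection statement for the osculating Lagrangian Schubert families $\calY_{\lambda^i}$ over the diagonal in $\LG(V)^n$. This is precisely the reduction and the heuristic the authors give in the remark following Lemma~\ref{L:compare}: they form the fiber product~\eqref{Eq:fiberProduct} of the $\calY_{\lambda^i}$ over $\LG(V)$, observe that its expected dimension is $\dim\LG(V)+n-\sum\|\lambda^i\|$, and note that the inequality~\eqref{Eq:codimension} says exactly that this expected dimension is at most $n-2$, so the conjecture ``would follow from a Bertini-type theorem for the families $\calY_{\lambda^i}$ implying that the intersection~\eqref{Eq:fiberProduct} is proper.''

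In short, you have not proved the conjecture, but neither have the authors; your reduction and your diagnosis of the gap (Kleiman does not apply since the flags form a single $\PGL$-orbit rather than generic $\Sp(V)$-translates, and no Bethe-ansatz or degeneration analogue is yet available in the Lagrangian setting) match the paper's own account of why this is left open. Your honest framing---that the dimension estimate on the incidence variety $I$ is the missing ingredient, and that Theorem~\ref{Th:simple_condition} is the reachable weakening---is exactly right and is consistent with the paper.
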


\begin{lemma}\label{L:compare}
  The inequality~$\eqref{Eq:simple_condition}$ implies
  the inequality~$\eqref{Eq:codimension}$.
\end{lemma}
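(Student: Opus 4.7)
The plan is to rewrite both inequalities in a common form and then reduce the implication to a trivial bound on $\|\nu^i\|$. Write $\blambda = (\lambda, \mu, \nu^1, \dotsc, \nu^n)$ as in Theorem~\ref{Th:simple_condition}, so that the corresponding list of partitions in Conjecture~\ref{C:Bertini} has length $n+2$.

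First, I would simplify the right-hand side of~\eqref{Eq:codimension} applied to $\blambda$. Using $\|\kappa\| = \tfrac{1}{2}(|\kappa|+\ell(\kappa))$ for each symmetric partition $\kappa$, the identity $\dim\LG(V) = \tfrac{1}{2}(m^2 + m)$, and the Schubert-problem relation $|\lambda| + |\mu| + \sum_i |\nu^i| = m^2$, the expression
\[
   \|\lambda\| + \|\mu\| + \sum_{i=1}^n \|\nu^i\| - \dim\LG(V)
\]
collapses to $\tfrac{1}{2}\bigl(\ell(\lambda) + \ell(\mu) + \sum_i \ell(\nu^i) - m\bigr)$. Thus~\eqref{Eq:codimension} is equivalent to
\[
   4 + m \ \leq\ \ell(\lambda) + \ell(\mu) + \sum_{i=1}^n \ell(\nu^i).
\]

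Second, I would multiply~\eqref{Eq:simple_condition} by $2$ and rearrange to the equivalent form
\[
   4 + m \ \leq\ \ell(\lambda) + \ell(\mu) + 2n - \sum_{i=1}^n |\nu^i|.
\]
So the implication~\eqref{Eq:simple_condition}$\Rightarrow$\eqref{Eq:codimension} reduces to the inequality
\[
   2n - \sum_{i=1}^n |\nu^i| \ \leq\ \sum_{i=1}^n \ell(\nu^i),
   \qquad\text{i.e.,}\qquad
   n \ \leq\ \sum_{i=1}^n \|\nu^i\|.
\]

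Finally, this last bound is immediate: every partition $\nu^i$ appearing in a Schubert problem is nonempty (otherwise the corresponding Schubert condition is trivial and could be dropped), so $|\nu^i|\geq 1$ and $\ell(\nu^i)\geq 1$, whence $\|\nu^i\| = \tfrac{1}{2}(|\nu^i|+\ell(\nu^i))\geq 1$. Summing over $i$ gives the required inequality. The argument is essentially a bookkeeping computation; there is no substantive obstacle beyond correctly handling the $\tfrac{1}{2}$ factor coming from the definition of $\|\cdot\|$.
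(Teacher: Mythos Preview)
Your proof is correct and follows essentially the same approach as the paper: both arguments use the identities $\|\kappa\|=\tfrac12(|\kappa|+\ell(\kappa))$, $\dim\LG(V)=\tfrac12(m^2+m)$, and $|\lambda|+|\mu|+\sum_i|\nu^i|=m^2$ to reduce the implication to the observation that $\|\nu^i\|\geq 1$ for each $i$. Your organization---first rewriting both inequalities in the common form ``$4+m\leq\cdots$'' and then comparing---is slightly more transparent than the paper's step-by-step transformation, but the content is identical.
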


\begin{proof}
 Let $\blambda=(\lambda,\mu,\nu^1,\dotsc,\nu^n)$ be a symmetric Schubert problem on
 $\Gr(m,V)$.
 Rewrite the inequality~\eqref{Eq:simple_condition} as
\[
  2\ +\ \frac{1}{2}(m-\ell(\lambda)-\ell(\mu))\ \leq\
   n\ -\ \frac{1}{2}(|\nu^1|+\dotsb+|\nu^n|)\,.
\]
 Since $1\leq \|\nu\|=\frac{1}{2}(|\nu|+\ell(\nu))$, this implies that
%
\[    2\ +\ \frac{1}{2}(m-\ell(\lambda)-\ell(\mu))\ \leq\
  \frac{1}{2}(\ell(\nu^1)+\dotsb+\ell(\nu^n))\,.
\]
%
 As $\blambda$ is a Schubert problem on $\Gr(m,V)$, we have
 $m^2=|\lambda|+|\mu|+|\nu^1|+\dotsb+|\nu^n|$, and so this becomes
 \begin{eqnarray*}
   2& \leq&\frac{1}{2}\bigl(
     \ell(\lambda)+\ell(\mu)+\ell(\nu^1)+\dotsb+\ell(\nu^n)-m
      +|\lambda|+|\mu|+|\nu^1|+\dotsb+|\nu^n|-m^2 \bigr)\\
  &=&\|\lambda\|+\|\mu\|+\|\nu^1\|+\dotsb+\|\nu^n\| - \frac{1}{2}(m^2+m)\,,
 \end{eqnarray*}
 which is the inequality~\eqref{Eq:codimension} of Conjecture~\ref{C:Bertini} as
 $\dim \LG(V)=\frac{1}{2}(m^2+m)$
\end{proof}

\begin{remark}
 For the Schubert problem of Example~\ref{Ex:12}, we have
 \begin{multline*}
  \qquad
   \|\I\| +  \|\I\| +  \|\I\| +
   \|\TT\| +    \|\TT\| +
   \|\ThII\|
    \ -\ \dim\LG(\C^8)\\
  \ =\
 1+1+1+3+3+3-10\ =\ 2\,.\qquad
 \end{multline*}
 Thus the inequality~\eqref{Eq:codimension} holds, and so Conjecture~\ref{C:Bertini} predicts
 the congruence modulo four that we observed in Example~\ref{Ex:12}.

 In every example of a Schubert problem we have computed in which the
 inequality~\eqref{Eq:codimension} holds, we have observed this congruence  to $d(\blambda)$
 modulo four.

 The intuition behind Conjecture~\ref{C:Bertini} is the following.
 Let $\defcolor{\calY_{\lambda}}\subset\LG(V)\times\P^1$ be
 $\{(H,t)\mid H\in Y_\lambda\Fdot(t)\}$, which is the family over $\P^1$ whose fiber over
 $t\in\P^1$ is $Y_\lambda\Fdot(t)$.
 This has codimension $\|\lambda\|$ in $\LG(V)\times\P^1$.
 Let $\defcolor{\Delta}\colon\LG(V)\to\LG(V)^n$ be the diagonal map.
 Suppose that $\blambda=(\lambda^1,\dotsc,\lambda^n)$ is a symmetric Schubert problem.
 Then the fiber product of $\calY_{\lambda^1},\dotsc,\calY_{\lambda^n}$ over $\LG(V)$ is
 \begin{equation}\label{Eq:fiberProduct}
    \Bigl(\calY_{\lambda^1}\times\calY_{\lambda^2}\times\dotsb\times\calY_{\lambda^n}\Bigr)
    \;\bigcap\; \Bigl(\Delta\times 1_{(\P^1)^n}\bigl(\LG(V)\times(\P^1)^n\bigr)\Bigr)\,.
 \end{equation}
 The codimension of $\calY_{\lambda^1}\times\dotsb\times\calY_{\lambda^n}$ in
 $(\LG(V)\times\P^1)^n$ is $\|\lambda^1\|+\dotsb+\|\lambda^n\|$ and the dimension of
 $\LG(V)\times(\P^1)^n$ is $\frac{1}{2}(m^2+m) +n$.
 Thus the expected dimension of~\eqref{Eq:fiberProduct} is
\[
   \frac{1}{2}(m^2+m) +n\ -\
    \|\lambda^1\|-\|\lambda^2\|-\dotsb-\|\lambda^n\|\,,
\]
 which is the difference of $n=\dim(\P^1)^n$ and the number~\eqref{Eq:codimension}.
 Thus Conjecture~\ref{C:Bertini} and the observed congruence modulo four would follow
 from a Bertini-type theorem for the families $\calY_{\lambda^i}$ implying that the
 intersection~\eqref{Eq:fiberProduct} is proper.
\end{remark}

The inequality~\eqref{Eq:codimension} is not the final word on this congruence modulo four.
When it fails, there may or may not be a congruence modulo four.
We illustrate this with three examples.

The Schubert problem
 $\blambda=(\ThTI, \ThII, \TI, \I,\I)$
 on $\Gr(4,\C^8)$ has 14 solutions.
 We have
\[
   \|\I\| + \|\I\| +  \|\TI\| + \|\ThII\| +  \|\ThTI\|
    \ -\ \dim\LG(\C^8)
  \ =\
 1+1+2+3+4-10\ =\ 1\,,
\]
 so the inequality~\eqref{Eq:codimension} does not hold.
 Table~\ref{T:14} shows the result of computing 200,000 instances of this problem,
%
%
\begin{table}[htb]
 \caption[Schubert problem with 14 solutions]{
  Schubert problem $\left\{
   \bigl(\,\I,2\bigr)\,,\,
   \bigl(\,\TI,1\bigr)\,,\,
   \bigl(\,\ThII,1\bigr)\,,\,
   \bigl(\,\ThTI,1\bigr)
  \right\}$ with 14 solutions.}\label{T:14}

 \begin{tabular}{|c||c|c|c|c|c|c|c|c||c|}\hline
  Num. real&0&2&4&6&8&10&12&14&Total\\\hline\hline
  Frequency&38008&17926&14991&4152&6938&210&6038&111737&200000\\\hline
 \end{tabular}

\end{table}
which took $8.7$ gigaHertz-days.
We observed every possible number of real solutions and thus there is no congruence modulo
four for this problem.

The Schubert problem
 $\blambda=(\,\ThTI, \ThTI, \TI, \I)$
 on $\Gr(4,\C^8)$ has 8 solutions.
 We have
\[
   \|\I\| + \|\TI\| +  \|\ThTI\| + \|\ThTI\|
    \ -\ \dim\LG(\C^8)
  \ =\
 1+2+4+4-10\ =\ 1\,,
\]
 so the inequality~\eqref{Eq:codimension} does not hold.
 Nevertheless, we computed 400,000 instances of this problem using 265 gigaHertz-days (see
 Table~\ref{T:8}),
%
%
\begin{table}[htb]
 \caption[Schubert problem with 8 solutions.]{Schubert problem $\left\{
   \bigl(\,\I,1\bigr)\,,\,
   \bigl(\,\TI,1\bigr)\,,\,
   \bigl(\,\ThTI,2\bigr)
  \right\}$ with 8 solutions}\label{T:8}

 \begin{tabular}{|c||c|c|c|c|c||c|}\hline
  Num. real&0&2&4&6&8&Total\\\hline\hline
  Frequency&160337&0&39663&0&200000&400000\\\hline
 \end{tabular}
\end{table}
observing a congruence modulo four.

Finally, the Schubert problem
 $\blambda=(\ThII, \ThII, \TI,\TI)$
 on $\Gr(4,\C^8)$ has 8 solutions.
 We have
\[
   \|\TI\| + \|\TI\| +  \|\ThII\| +  \|\ThII\| +
    \ -\ \dim\LG(\C^8)
  \ =\
 2+2+3+3-10\ =\ 0\,,
\]
 so the inequality~\eqref{Eq:codimension} does not hold.
 In fact $\blambda$ gives a Schubert problem on $\LG(\C^8)$ with four solutions, so we
 have that $f((X_{\bmu})_\angle)=Z_{\bmu}$.
 Nevertheless, we computed 400,000 instances of this problem using 3.2 gigaHertz-years
 (see  Table~\ref{T:8II}),
%
%
\begin{table}[htb]
 \caption[Schubert problem with 8 solutions.]{Schubert problem $\left\{
   \bigl(\,\TI,2\bigr)\,,\,
   \bigl(\,\ThII,2\bigr)
  \right\}$ with 8 solutions.}\label{T:8II}
 \begin{tabular}{|c||c|c|c|c|c||c|}\hline
  Num. real&0&2&4&6&8&Total\\\hline\hline
  Frequency&147611&0&152389&0&100000&400000\\\hline
 \end{tabular}
\end{table}
observing a congruence modulo four.

We studied fibers of $f\colon X_{\bmu}\to Z_{\bmu}$ over points of $U_{\bmu}(\R)$ for all
symmetric Schubert problems on $\Gr(m,V)$ when $m\leq 4$ whose degree $d(\bmu)$ was
at most $96$, a total of $44$ Schubert problems in all.
For each, we computed the fibers of $X_{\bmu}$ over several hundred thousand points in
$U_{\bmu}(\R)$, determining the number of real points in each fiber.
These data are recorded in frequency tables such as those we have given here.
These are available on line~\cite{Lower_Sym} and are part of a larger
experiment~\cite{Lower_Exp}.
Of these, 21 satisfy the inequality~\eqref{Eq:codimension} and each exhibited a
congruence modulo four.
(Ten satisfy the weaker condition~\eqref{Eq:dimension_estimate}.)
Four of the remaining 23 do not satisfy the inequality~\eqref{Eq:codimension}, but still
had the congruence modulo four, and the remaining 19 neither satisfy the
inequality~\eqref{Eq:codimension}, nor have a congruence modulo four.

%
\subsection{Further Lacunae}\label{S:gaps}

We close with two more symmetric Schubert problems which exhibit additional lacunae in
their observed numbers of real solutions.
The first is the problem
$\{ (\ThThTh,1),
    (\I,7)\}$
on $\Gr(4,\C^8)$ with 20 solutions.
The inequality~\eqref{Eq:simple_condition} holds for this problem, so the possible numbers
of real solutions are congruent to 20 modulo four.
Table~\ref{T:20} displays the result of computing 400,000 instances
\begin{table}[htb]
%
%
\caption[Schubert problem with 20 solutions.]{Schubert problem $\left\{
   \bigl(\,\ThThTh,1\bigr)\,,\,
   \bigl(\,\I,7\bigr)
  \right\}$ with 20 solutions.}\label{T:20}
\[
\begin{tabular}{|c||c|c|c|c|c|c|c|c|c|c|c|c|c|c||c|}\hline
  Num. real&0&2&4&6&8&10&12&14&16&18&20&Total\\\hline\hline
  Frequency&7074&0&114096&0&129829&0&0&0&0&0&119001&400000\\\hline
\end{tabular}
\]
\end{table}
which used 2 gigaHertz-days of computing.
In this computation, we did not observe 12 or 16 real solutions.
This is one of a family of Schubert problems on $\Gr(m,\C^{m{+}p})$ for which there are
provable lower bounds and lacunae.
This is explained in~\cite{Lower}, which describes the larger
experiment~\cite{Lower_Exp}.

Our last example is the symmetric Schubert problem
$\left\{
   \bigl(\,\I,1\bigr)\,,\,
   \bigl(\,\TI,2\bigr)\,,\,
   \bigl(\,\TT,1\bigr)\,,\,
   \bigl(\,\ThII,1\bigr)
  \right\}$
on $\Gr(4,\C^8)$ with 16 solutions.
The inequality~\eqref{Eq:codimension} does not hold, so we expect every even number of
solutions between 0 and 16 to occur.
Table~\ref{T:20} displays the result of computing 200,000 instances
\begin{table}[htb]
%
%
\caption[Schubert problem with 16 solutions.]{Schubert problem $\left\{
   \bigl(\,\I,1\bigr)\,,\,
   \bigl(\,\TI,2\bigr)\,,\,
   \bigl(\,\TT,1\bigr)\,,\,
   \bigl(\,\ThII,1\bigr)
  \right\}$ with 16 solutions}
\[
 \begin{tabular}{|c||c|c|c|c|c|c|c|c|c||c|}\hline
  Num. real&0&2&4&6&8&10&12&14&16&Total\\\hline\hline
  Frequency&37069&16077&24704&10&22140&0&0&0&100000&200000\\\hline
 \end{tabular}
\]
\end{table}
which used $8.4$ gigaHertz-days of computing.
In this computation, we did not observe 10 or 12 or 14 real solutions.
We do not know a reason for this gap in the observed number of real solutions.

\providecommand{\bysame}{\leavevmode\hbox to3em{\hrulefill}\thinspace}
\providecommand{\MR}{\relax\ifhmode\unskip\space\fi MR }
\providecommand{\MRhref}[2]{%
  \href{http://www.ams.org/mathscinet-getitem?mr=#1}{#2}
}
\providecommand{\href}[2]{#2}

\end{document}